\newtheorem{theorem}{Theorem}[section]
\newtheorem{lemma}[theorem]{Lemma}
\newtheorem{proposition}[theorem]{Proposition}
\newtheorem{problem}[theorem]{Problem}
\newtheorem{corollary}[theorem]{Corollary}
\theoremstyle{definition}
\newtheorem{definition}[theorem]{Definition}
\theoremstyle{remark}
\newtheorem{remark}[theorem]{Remark}
\newtheorem{claim}[theorem]{Claim}
\numberwithin{equation}{section}
\begin{document}

\title{On non-separable growths of $\omega$ supporting measures}


\author[Piotr Borodulin-Nadzieja]{Piotr Borodulin-Nadzieja}
\address{Instytut Matematyczny, Uniwersytet Wroc\l awski}
\email{pborod@math.uni.wroc.pl}

\author[Tomasz \.{Z}uchowski]{Tomasz \.{Z}uchowski}
\address{Instytut Matematyczny, Uniwersytet Wroc\l awski}
\email{tomasz.zuchowski@math.uni.wroc.pl}

\thanks{The first author was partially supported by National Science Center grant no. 2013/11/B/ST1/03596 (2014-2017).}

\subjclass[2010]{Primary 03E75, 28A60, 28E15, 54D40.}

\begin{abstract}
	We present several $\mathsf{ZFC}$ examples of compactifications $\gamma\omega$ of $\omega$ such that their remainders $\gamma\omega\backslash\omega$ are nonseparable and carry strictly positive measures. 
\end{abstract}

\maketitle

\newcommand{\con}{\mathfrak c}
\newcommand{\eps}{\varepsilon}
\newcommand{\alg}{\mathfrak A}
\newcommand{\algb}{\mathfrak B}
\newcommand{\algc}{\mathfrak C}
\newcommand{\ma}{\mathfrak M}
\newcommand{\pa}{\mathfrak P}
\newcommand{\BB}{\protect{\mathcal B}}
\newcommand{\AAA}{\mathcal A}
\newcommand{\CC}{{\mathcal C}}
\newcommand{\FF}{{\mathcal F}}
\newcommand{\GG}{{\mathcal G}}
\newcommand{\KK}{{\mathcal K}}
\newcommand{\LL}{{\mathcal L}}
\newcommand{\PP}{{\mathcal P}}
\newcommand{\UU}{{\mathcal U}}
\newcommand{\VV}{{\mathcal V}}
\newcommand{\HH}{{\mathcal H}}
\newcommand{\DD}{{\mathcal D}}
\newcommand{\RR}{\protect{\mathcal R}}
\newcommand{\UUU}{{\mathcal U}}
\newcommand{\ide}{\mathcal N}
\newcommand{\btu}{\bigtriangleup}
\newcommand{\hra}{\hookrightarrow}
\newcommand{\ve}{\vee}
\newcommand{\we}{\cdot}
\newcommand{\de}{\protect{\rm{\; d}}}
\newcommand{\er}{\mathbb R}
\newcommand{\qu}{\mathbb Q}
\newcommand{\supp}{{\rm supp} }
\newcommand{\card}{{\rm card} }
\newcommand{\wn}{{\rm int} }
\newcommand{\ult}{{\rm ULT}}
\newcommand{\vf}{\varphi}
\newcommand{\osc}{{\rm osc}}
\newcommand{\ol}{\overline}
\newcommand{\me}{\protect{\bf v}}
\newcommand{\ex}{\protect{\bf x}}
\newcommand{\stevo}{Todor\v{c}evi\'c}
\newcommand{\cc}{\protect{\mathfrak C}}
\newcommand{\scc}{\protect{\mathfrak C^*}}
\newcommand{\lra}{\longrightarrow}
\newcommand{\sm}{\setminus}
\newcommand{\uhr}{\upharpoonright}
\newcommand{\fin}{\it fin}
\newcommand{\sub}{\subseteq}
\newcommand{\ms}{$(M^*)$}
\newcommand{\m}{$(M)$}
\newcommand{\MA}{MA$(\omega_1)$}
\newcommand{\clop}{\protect{\rm Clop} }

\newcommand{\cal}{\mathbb Z}
\newcommand{\zeron}{\cap\{0,\ldots,n\}}
\newcommand{\cantor}{2^{\omega}}
\newcommand{\limN}{\xrightarrow{\scriptscriptstyle n\to\infty}}
\newcommand{\limK}{\xrightarrow{\scriptscriptstyle k\to\infty}}
\newcommand{\limI}{\xrightarrow{\scriptscriptstyle i\to\infty}}
\newcommand{\en}{\mathbb N}
\newcommand{\zet}{\mathcal{Z}_{0}}
\newcommand{\el}{\ell_{1}}
\newcommand{\Tuk}{\preceq_{T}}
\newcommand{\nTuk}{\npreceq_{T}}
\newcommand{\epsi}{\mathcal{E}_{\lambda}}
\newcommand{\cw}{[\mathfrak c]^{<\omega}}
\newcommand{\Bairefin}{\en^{<\omega}}
\newcommand{\kbf}{\mathcal K (\Bairefin) }
\newcommand{\kcz}{\mathcal K (\Czero)}
\newcommand{\cantorfin}{2^{<\omega}}
\newcommand{\Czero}{\mathcal C_{0}}
\newcommand{\Baire}{\omega^{\omega}}
\newcommand{\aort}{\mathcal A^{\perp}}
\newcommand{\KQ}{\mathcal K(\mathbb Q)}
\newcommand{\Tukeq}{\equiv_{T}}
\newcommand{\Tukstr}{\prec_{T}}


\section{Introduction}\label{introduction}

A compact space $K$ is called a growth of $\omega$ if there is a compactification $\gamma\omega$ of a countable discrete space $\omega$ such that $K$ is homeomorphic to the remainder $\gamma\omega\backslash\omega$. It is well-known that every separable compact space is a growth of $\omega$. Moreover, every such space carries a strictly positive (regular probability Borel) measure, i.e. measure that is positive on every nonempty open subset.

Stone spaces of subalgebras of $\PP(\omega)/\fin$ are natural examples of growths of $\omega$. Recall that due to Parovi\v{c}enko theorem every Boolean algebra of size at most $\omega_1$ can be embedded in $\PP(\omega)/\fin$. In particular, under Continuum
Hypothesis the Lebesgue measure algebra $\mathrm{Bor}([0,1])/_{\lambda=0}$ can be embedded in $\PP(\omega)/\fin$. Since the Stone space of measure algebra is nonseparable and supports a measure, under $\mathsf{CH}$ there is a nonseparable growth of
$\omega$ supporting a measure. On the other hand, under Open Coloring Axiom the measure algebra cannot be embedded in $\PP(\omega)/\fin$ (see \cite{Dow-Hart}). 
Therefore, it is natural to ask the following question:

\begin{problem}[\cite{Drygier-Plebanek15}]
Is there a $\mathsf{ZFC}$ example of a non-separable growth of $\omega$ supporting a measure?
\end{problem}

In terms of Boolean algebras we ask if there is (in $\mathsf{ZFC})$ a non-$\sigma$-centered Boolean algebra
supporting a measure (loosely speaking, a \emph{non-trivial piece} of measure algebra) which can be embedded in $\PP(\omega)/\fin$.


Notice that compact spaces supporting measures are ccc. Even constructing a $\mathsf{ZFC}$ example of a ccc nonseparable growth of $\omega$ is not a trivial task - in \cite{vanMill79} van Mill offered a bottle of Jenever for such example\footnote{See
	\cite{vanMill79} for the definition of Jenever. Unfortunately
	the authors are not aware of any offer of this kind for finding a $\mathsf{ZFC}$ example of such space which additionally supports a measure.}. The award apparently went to Bell (see \cite[Example 2.1]{Bell80} and \cite[Example 3.2]{vanMill82}) and later on another examples were found (see
	e.g. \cite[Theorem 8.4]{Todorcevic}). Recently, Drygier and Plebanek (\cite{Drygier-Plebanek15}) have constructed a nonseparable growth of $\omega$
carrying a strictly positive measure under the assumption $\mathfrak{b}=\con$.

We present three $\mathsf{ZFC}$ constructions of compactifications of $\omega$ providing a positive answer to the raised question:

\begin{theorem}\label{general}
There exists a compactification $\gamma\omega$ of $\omega$ such that its remainder $\gamma\omega\backslash\omega$ is not separable and supports a measure.
\end{theorem}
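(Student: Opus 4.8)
The plan is to pass to Boolean algebras. A compactification $\gamma\omega$ is determined by a subalgebra $\alg\sub\PP(\omega)$ containing all finite sets (equivalently, all singletons): one takes $\gamma\omega=\mathrm{St}(\alg)$, and a point of the remainder $\gamma\omega\sm\omega$ is just an ultrafilter on $\alg$ extending the filter of cofinite sets, so the remainder is $\mathrm{St}(\alg/\fin)$ and its clopen algebra is $\alg/\fin$. Two standard translations now reduce the theorem to an algebraic statement. First, $\mathrm{St}(\alg/\fin)$ is separable if and only if $\alg/\fin$ is $\sigma$-centered: a dense sequence of points is exactly a sequence of ultrafilters whose members exhaust the nonzero elements, and such a sequence exists precisely when the algebra is covered by countably many centered sets. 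Second, since $\mathrm{St}(\alg/\fin)$ is compact and zero-dimensional, a strictly positive regular Borel measure on it is the same as a strictly positive finitely additive measure on $\alg/\fin$, i.e. a finitely additive measure on $\alg$ vanishing on the finite sets and positive on every infinite member. So it suffices to build $\alg\sub\PP(\omega)$ with $\fin\sub\alg$ carrying such a measure and with $\alg/\fin$ non-$\sigma$-centered --- precisely the object described in the introduction.

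For the measure I would use a density. Fix a partition $\omega=\bigcup_n I_n$ into consecutive finite intervals with $\card I_n\to\infty$, let $\mu_n$ be the uniform probability measure on $I_n$, fix a free ultrafilter $p$ on $\omega$, and set
\[ \mu(A)=\lim_{n\to p}\frac{\card(A\cap I_n)}{\card I_n}. \]
This is a finitely additive probability measure on $\PP(\omega)$ vanishing on finite sets, so the only content of the measure requirement is \emph{strict positivity}: I must choose the generators of $\alg$ so that every infinite Boolean combination of them still has positive $p$-density. The natural device is an uncountable $\mu$-independent family $\{A_\xi:\xi<\omega_1\}$, each finite Boolean combination having a prescribed, hence positive, density on almost every block, so that $\mu$ behaves like a non-atomic product measure. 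By itself, however, the algebra generated by an independent family is the free algebra on $\omega_1$ generators, whose Stone space is $2^{\omega_1}$ and hence \emph{separable}; so independence buys strict positivity of $\mu$ but nothing toward non-$\sigma$-centeredness, which has to be created by enlarging $\alg$ beyond the free algebra.

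The hard part is exactly this non-$\sigma$-centeredness, and there is a real obstruction to the cheap strategies. Carrying a measure forces $\alg/\fin$ to be ccc, so it admits no uncountable antichain; worse, measure algebras have precaliber $\omega_1$, whence any uncountable family of generators of measure bounded away from $0$ already contains an uncountable subfamily all of whose finite meets are positive. Non-$\sigma$-centeredness therefore cannot be read off the generators, and must instead be manufactured by imitating the structural reason the Lebesgue measure algebra is not $\sigma$-centered: non-atomicity together with enough closure. Concretely, given countably many points $u_0,u_1,\dots$ of $\mathrm{St}(\alg/\fin)$, non-atomicity of $\mu$ lets me choose $D_n\in u_n$ with $\mu(D_n)<\eps\,2^{-n}$; if $\alg$ contains a positive-density set $B$ almost disjoint from every $D_n$, then $u_n(B)=0$ for all $n$, so no countable family of points is dense. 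I would accordingly build $\alg$ by a transfinite recursion that, alongside the $A_\xi$, closes the family under such ``small-complement'' pseudo-intersections, maintaining at each stage a density on every member so that strict positivity of $\mu$ survives.

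The main obstacle is to make this closure uniform in ZFC. A naive diagonalisation that lists all countable families of points of $\alg/\fin$ and defeats them one by one is hopeless once $\con>\omega_1$, since there are then more than $\omega_1$ such families; non-$\sigma$-centeredness must issue from a single structural feature of $\alg$ that defeats every countable family at once --- a positive-density set almost disjoint from any sequence of sets of summably small density on which a point might concentrate --- just as non-atomicity does for the measure algebra. One is tempted to package this into a measure algebra of size $\omega_1$ and invoke Parovi\v{c}enko's theorem (recalled in the introduction) to embed it into $\PP(\omega)/\fin$; but engineering the structural feature in a ZFC algebra of size $\omega_1$ is itself the crux, so I expect the construction to be carried out directly inside $\PP(\omega)/\fin$, with the combinatorics of the density on $\omega$ doing the real work.
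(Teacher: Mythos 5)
Your opening reduction is correct and matches the paper's: the theorem is equivalent to producing a non-$\sigma$-centered subalgebra of $\PP(\omega)/\fin$ supporting a strictly positive finitely additive measure (the paper's Theorem \ref{general-bool}), via the standard dictionary between separability of $\ult(\algb)$ and $\sigma$-centeredness of $\algb$, and between strictly positive measures on $\algb$ and on its Stone space. But from there the proposal contains no construction: it lists desiderata (an ultrafilter-limit density measure, an uncountable $\mu$-independent family, closure under ``small-complement pseudo-intersections''), correctly identifies that the generated free algebra is $\sigma$-centered, and then ends by saying you \emph{expect} a construction to exist. The entire mathematical content of the theorem --- actually exhibiting the algebra and verifying simultaneously that the measure stays strictly positive and that $\sigma$-centeredness fails --- is missing, and carrying out your transfinite recursion while ``maintaining at each stage a density on every member'' is exactly the hard part you have not done.

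Worse, your strategic analysis rules out the very approach the paper uses. You dismiss diagonalisation against countable families on the grounds that when $\con>\omega_1$ there are too many of them; but the paper's first construction (Section \ref{section:2}) is precisely such a diagonalisation, of length $\con$, not $\omega_1$. The point you missed is that one need not enumerate countable families of ultrafilters on the final algebra: any candidate dense sequence $\{x_n\}$ in $\ult(\alg)$ restricts to a sequence of points $t_n$ of $\ult(\alg_0)\simeq\cantor$ where $\alg_0=\clop(\cantor)$, and there are exactly $\con$ countable subsets $P_\alpha\sub\cantor$ to defeat. For each one the paper adds an open set $U_\alpha\supseteq P_\alpha$ whose complement $F_\alpha$ is nonempty; since each $t_n$ lies in the open set $U_\alpha$, a clopen neighbourhood of $t_n$ disjoint from $F_\alpha$ belongs to $x_n$, so $\widehat{F_\alpha}$ misses every $x_n$ --- killing the candidate at the level of traces. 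Strict positivity of the measure (Lebesgue measure $\lambda$ on $\cantor$, later transferred to asymptotic density by an explicit embedding into $\PP(\omega)/\fin$) survives all $\con$ steps at once because each $U_\alpha$ is built to depend only on coordinates in $B_\alpha$, where $\{B_\alpha\colon\alpha<\con\}$ is an almost disjoint family of size $\con$ (which exists in $\mathsf{ZFC}$): almost disjointness of the supports makes the sets $F_{\alpha_1},\dots,F_{\alpha_m}$ nearly stochastically independent, so any nonempty finite Boolean combination with a clopen set has positive $\lambda$-measure. This is the uniform ``single structural feature'' you were looking for, and it is obtained by the cheap strategy you rejected; your cardinality obstruction evaporates once the candidates are indexed by their $\alg_0$-traces rather than by arbitrary countable sets of ultrafilters.
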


Although all of the constructions are the Stone spaces of some Boolean subalgebras of $\PP(\omega)/\fin$, they are of quite different nature. 


The first example (see Section \ref{section:2}) is a subalgebra of $\mathrm{Bor}(\cantor)$ generated by all clopen subsets of $\cantor$ and some sequence $\{U_{\alpha}\colon \alpha<\con\}$ of open sets, where each $U_\alpha$ serves to \emph{kill} certain candidates for a countable dense set in the
resulting Stone space. It is proved that this algebra has required properties with the Lebesgue measure $\lambda$ being strictly positive on it and, moreover, it can be embedded into $\PP(\omega)/\fin$ in such way that $\lambda$ is transferred to the
asymptotic density. This in a sense strengthen the result of Frankiewicz and Gutek that under CH one can construct an embedding of the Lebesgue measure algebra into $\PP(\omega)/\fin$ with such property (see \cite{Frankiewicz-Gutek-81}).

The second example (see Section \ref{section:3}) is constructed using methods from \cite[Theorem 8.4]{Todorcevic}. The non-$\sigma$-centerdness is a result of certain maximality property of the algebra. Contrary to the case of the first example, although it supports a strictly positive measure, it is not clear how
this measure looks like. Its existence is deduced from the Kelley criterion based on the notion of an intersection number. This example appears also in \cite{Pbn-Tanmay} although there the existence of measure is proved using forcing methods. It is
worth to mention that this example can be easily modified to obtain some additional properties, e.g. in \cite{Pbn-Tanmay} it is proved that under $\mathrm{add}(\mathcal{N}) = \mathrm{non}(\mathcal{M})$
we can additionally require that this Boolean
algebra does not contain an uncountable independent family and its Stone space has a countable $\pi$-character.

Ironically, after carrying out the above constructions, we have realized that Bell's example of ccc non-separable growth of $\omega$ mentioned above also supports a measure. Bell constructed a non-$\sigma$-centered Boolean subalgebra of $\mathcal{P}(\omega)/\fin$. We show (see Section \ref{section:4}) that it can embedded to $\mathrm{Bor}([0,1])/_{\lambda=0}$. It is, however, not clear if it can be done in a way that the asymptotic density is transferred to the Lebesgue measure.

\section{Preliminaries}\label{preliminaries}
In the sequel, we shall consider Boolean subalgebras of the quotient algebra $\PP(\omega)/\fin$. For every such algebra $\algb$, its Stone space $\ult(\algb)$ (of ultrafilters on $\algb$) is a continuous image of $\ult(\PP(\omega)/\fin)\simeq\beta\omega\backslash\omega$, where by $\beta\omega$ we denote the \v{C}ech-Stone compactification of $\omega$. Therefore, in that case $\ult(\algb)$ is a remainder $\gamma\omega\backslash\omega$ of some compactification $\gamma\omega$ of $\omega$. Recall that $\ult(\algb)$ is separable iff $\algb$ is $\sigma$-centered.

By $\algb^+$ we understand the family of all positive elements of $\algb$. If $\algb$ is a Boolean algebra and $A\in \algb$, then by $\widehat{A}=\{x\in\ult(\algb)\colon A\in x\}$ we denote the corresponding clopen subset of $\ult(\algb)$.

By a measure $\mu$ on a Boolean algebra $\algb$ we mean a finitely additive probability measure. We call it a strictly positive measure if $\mu(B)>0$ for each $B\in\algb^{+}$ (in this case we say also that $\algb$ supports the measure $\mu$). If $\mu$ is a 
measure on $\algb$, then $\widehat{\mu}$  defined by $\widehat{\mu}(\widehat{A})=\mu(A)$ is a finitely additive measure on the algebra of clopen subsets of $\ult(\algb)$. The measure $\widehat{\mu}$ can be uniquely extended to a (regular Borel
probability) $\sigma$-additive measure $\overline{\mu}$ on $\ult(\algb)$. Note that if $\mu$ is strictly positive, then $\overline{\mu}$ is a
strictly positive measure on $\ult(\algb)$.

Therefore, Theorem \ref{general} is an immediate consequence of the following statement.

\begin{theorem}\label{general-bool}
There exists a non-$\sigma$-centered Boolean algebra $\algb\sub\PP(\omega)/\fin$ that supports a measure.
\end{theorem}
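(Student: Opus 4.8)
The plan is to build the algebra $\algb$ inside $\mathrm{Bor}(\cantor)$ as the subalgebra generated by all clopen subsets of $\cantor$ together with a transfinite sequence $\{U_\alpha : \alpha < \con\}$ of open sets, and then to verify that the Lebesgue measure $\lambda$ restricts to a strictly positive measure on $\algb$ and that $\algb$ is not $\sigma$-centered. Since every subalgebra of $\mathrm{Bor}(\cantor)$ has size at most $\con$ and, more importantly, since $\mathrm{Bor}(\cantor)/_{\lambda=0}$ (or a suitable version carrying a strictly positive measure) embeds into $\PP(\omega)/\fin$ via an embedding transferring $\lambda$ to asymptotic density, the resulting algebra will sit inside $\PP(\omega)/\fin$ as required. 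I would therefore first state and fix such an embedding $\cantor \hookrightarrow \PP(\omega)/\fin$ sending $\lambda$ to density, so that from then on I can work entirely on the concrete, geometric side in $\cantor$ and only at the very end pull everything back into $\PP(\omega)/\fin$.

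The strict positivity is the easy half: since every nonzero element of $\algb$ is a finite Boolean combination of clopen sets and the chosen open sets $U_\alpha$, each nonempty such element is a Borel set of positive Lebesgue measure (one arranges the construction so that no nonempty generator is $\lambda$-null), and hence $\lambda$ witnesses that $\algb$ supports a measure. The crux of the argument is non-$\sigma$-centeredness, equivalently non-separability of the Stone space. The strategy here is a diagonalization of length $\con$: I would enumerate all potential ``countable dense sets'' in the Stone space — concretely, all countable families $\{F_n : n \in \omega\}$ of candidate ultrafilter-bases (or, dually, all countable subsets of $\algb$ that could be centered and generate a dense set of points) — as $\{D_\alpha : \alpha < \con\}$, and at stage $\alpha$ choose the open set $U_\alpha$ so as to \emph{kill} $D_\alpha$, i.e. so that $U_\alpha$ (or its complement) separates the points generated by $D_\alpha$ from density in such a way that $D_\alpha$ cannot be dense in $\ult(\algb)$. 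The bookkeeping must ensure that each $U_\alpha$, while defeating $D_\alpha$, retains positive measure on every nonempty generator seen so far, so that strict positivity is preserved throughout.

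The main obstacle, as I see it, is making the killing step compatible with strict positivity: choosing $U_\alpha$ to defeat a candidate dense set is easy in isolation (one can usually take $U_\alpha$ to be an open set whose closure misses the relevant points), but one must do this without destroying the positivity of measure on all earlier generators and without accidentally making some finite Boolean combination null or equal to another. The key technical lemma I would isolate is a statement of the form: given the algebra $\algb_\alpha$ generated so far (of size $<\con$) and a countable candidate $D_\alpha$, there exists an open set $U_\alpha$ of positive measure such that $D_\alpha$ fails to be dense in $\ult(\algb_\alpha \cup \{U_\alpha\})$ while $\lambda$ remains strictly positive on the enlarged algebra. Proving this lemma is where the real work lies; it will rely on the fact that a countable centered family in a measure algebra cannot be dense unless it already ``fills up'' the space, so one can always find a positive-measure open set avoiding the limit points of $D_\alpha$. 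Once this lemma is in hand, the transfinite recursion goes through routinely, $\algb = \bigcup_{\alpha<\con} \algb_\alpha$ is the desired algebra, and embedding it into $\PP(\omega)/\fin$ via the fixed density-preserving map completes the proof of Theorem \ref{general-bool}.
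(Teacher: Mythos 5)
Your outline matches the paper's first construction in broad strokes (clopen algebra plus $\con$-many open sets $U_\alpha$ killing candidate dense sets, with $\lambda$ strictly positive), but it contains a fatal gap at the very first step: you propose to ``state and fix'' an embedding of $\mathrm{Bor}(\cantor)/_{\lambda=0}$ (or of some measure-carrying version of it) into $\PP(\omega)/\fin$ transferring $\lambda$ to asymptotic density, and then work entirely inside $\cantor$. No such embedding exists in $\mathsf{ZFC}$: as the paper's introduction recalls, under the Open Coloring Axiom the measure algebra cannot be embedded into $\PP(\omega)/\fin$ at all (Dow--Hart), and the Frankiewicz--Gutek density-preserving embedding is a $\mathsf{CH}$ result. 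Indeed, if your fixed embedding were available, the whole problem would be trivial — one would just embed the measure algebra and be done. This is precisely why the paper must construct the embedding $\Psi$ \emph{by hand, for its particular algebra} $\alg$: an explicit $\Psi_0$ on $\clop(\cantor)$ sending basic clopen sets to arithmetic progressions, an extension to each $U_\alpha$ via Buck's lemma (an increasing sequence of density sets has a density supremum), a verification of the homomorphism property through Sikorski's criterion by induction on the number of $U_\beta$'s appearing in an intersection, and a separate argument (Proposition \ref{transfer} in the paper) that $\Psi$ transfers $\lambda$ to $d$. All of this leans on the almost disjoint family $\{B_\alpha\colon\alpha<\con\}$, which forces distinct $U_\alpha$'s to depend on almost disjoint sets of coordinates — a structural ingredient entirely absent from your sketch.

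Two further points where your plan would not go through as stated. First, your enumeration of candidates is off: a countable dense subset of $\ult(\algb)$ is a countable family of \emph{ultrafilters} on the final algebra, and there are $2^{\con}$ such families, so they cannot be listed in a recursion of length $\con$; nor are they captured by countable subsets of $\algb$, since a dense set of points need not be countably generated. The paper's trick is to restrict each ultrafilter to $\alg_0=\clop(\cantor)$, obtaining a point of $\cantor\simeq\ult(\alg_0)$, so that only the $\con$-many countable subsets $P_\alpha\sub\cantor$ need to be enumerated — and then $U_\alpha$ is a \emph{small open cover} of $P_\alpha$ (supported on coordinates in $B_\alpha$), with the closed complement $F_\alpha=U_\alpha^{\mathsf{c}}$ witnessing non-density; your description of taking ``an open set whose closure misses the relevant points'' has these roles reversed. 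Second, strict positivity is not maintained by stage-by-stage bookkeeping in the paper at all: the quasi-independence coming from the a.d.\ family yields, in a single uniform lemma, that every nonempty set of the form $U\cap F_{\alpha_1}\cap\ldots\cap F_{\alpha_m}$ has positive measure, via a product estimate $\lambda(X_N)\bigl(1-2^{-(N+1)}\bigr)^m>0$ over almost disjoint coordinate blocks. Your proposed recursion lemma is plausible for positivity against \emph{earlier} generators, but even granting it, the argument would still collapse at the transfer to $\PP(\omega)/\fin$, which is the genuinely hard part of the theorem and the one your proposal assumes away.
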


Recall some standard notions concerning the Cantor space $\cantor$. For any $\varphi\in\cantorfin$ by $[\varphi]=\{x\in\cantor\colon \varphi\subseteq x\}$  we denote the basic subset of $\cantor$ corresponding to $\varphi$. We shall consider the Lebesgue measure $\lambda$ on $\cantor$, defined as the unique Borel probability measure on $\cantor$ such that $\lambda([\varphi])=2^{-k}$ for any $k\in\omega$ and $\varphi\in 2^k$. We say that a subset $A\sub\cantor$ depends on a set of coordinates $I\sub\omega$, which we denote $A\sim I$, if $A=B\times 2^{\omega\backslash I}$ for some $B\sub 2^I$.

We will consider also the asymptotic density defined on elements of the algebra $\PP(\omega)/\fin$. It is usually defined on subsets of $\omega$ as a function 
\[d(A)=\lim_{n\to\infty}\frac{|A\cap n|}{n}\] 
provided the limit exists. As $d(A)=0$ for any finite $A\sub\omega$, we can naturally transfer this function to the asymptotic density on $\PP(\omega)/\fin$, which we will also denote by $d$.

\section{Construction using an almost disjoint family}\label{section:2}

Fix an enumeration $\{P_{\alpha}\colon \alpha<\con\}$ of all countable subsets of $2^\omega$ and let $\{B_{\alpha}\colon \alpha<\con\}$ be an almost disjoint family in $\PP(\omega)$. Denote $\alg_{0}=\clop(\cantor)$.

For $\alpha<\con$ let $P_{\alpha}=\{t_{n}^{\alpha}\colon n\in\omega\}\sub\cantor$ and $B_{\alpha}=\{m_{i}^{\alpha}\colon i\in\omega\}\sub\omega$, where $m_{i}^{\alpha}<m_{j}^{\alpha}$ for $i<j$.  Define a sequence $(\varphi_{n}^{\alpha})_n$ of elements of $\cantorfin$ in the following way: 
\[\varphi_{0}^{\alpha}=t_{0}^{\alpha}|_{\{m_{0}^{\alpha}\}},\text{ } \varphi_{1}^{\alpha}=t_{1}^{\alpha}|_{\{m_{1}^{\alpha}, m_{2}^{\alpha}\}},\text{ } 
\varphi_{2}^{\alpha}=t_{2}^{\alpha}|_{\{m_{3}^{\alpha}, m_{4}^{\alpha}, m_{5}^{\alpha}\}}, \text{ etc.}\]
Define
\[ U_{\alpha}=\bigcup_{i\in\omega}[\varphi_{i}^{\alpha}] \]
and observe that $U_\alpha\subseteq 2^\omega$ is an open set. Let $F_{\alpha}= U_{\alpha}^\mathsf{c}$. 

Finally, let $\alg={\rm alg}\left(\alg_{0}\cup\{U_{\alpha}\colon \alpha<\con\}\right)$.

\begin{theorem}\label{main}
	The Boolean algebra $\alg$ has the following properties
	\begin{itemize}
		\item $\alg$ is not $\sigma$-centered,
		\item $\alg$ supports $\lambda$,
		\item there is a Boolean embedding $\Psi\colon \alg \to \mathcal{P}(\omega)/\fin$ such that $\lambda(A) = d(\Psi(A))$ for each $A\in \alg$.
	\end{itemize}
\end{theorem}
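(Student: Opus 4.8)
The plan is to read off all three properties from one analysis of the atoms
\[
W=C\cap\bigcap_{j=1}^{k}U_{\alpha_j}^{\eps_j},\qquad \eps_j\in\{0,1\},
\]
where $\alpha_1,\dots,\alpha_k$ are distinct, $C$ is a basic clopen set, and $U^1_\alpha=U_\alpha$, $U^0_\alpha=F_\alpha$; since every member of $\alg$ is a finite union of such atoms, it suffices to understand them. The structural fact I would use throughout is that $U_\alpha$ depends only on the coordinates in $B_\alpha$ (it is a union of cylinders whose domains are the successive blocks cut out of $B_\alpha$), so that, $\{B_\alpha\}$ being almost disjoint, for distinct indices the sets $U_{\alpha_1},\dots,U_{\alpha_k}$ are controlled by coordinate sets that become pairwise disjoint beyond a finite level. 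A second recurring point is that the blocks grow, so $\sum_i\lambda([\varphi_i^\alpha])=\sum_i 2^{-(i+1)}<\infty$ and hence $\lambda(F_\alpha)=\prod_i\bigl(1-2^{-(i+1)}\bigr)>0$.

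To see that $\alg$ supports $\lambda$ I would show that every nonempty atom $W$ has $\lambda(W)>0$. Fix a witness $x_0\in W$ and an $N$ exceeding the coordinates on which $C$ depends and containing all pairwise intersections $B_{\alpha_i}\cap B_{\alpha_j}$, so that past $N$ the blocks of the various $B_{\alpha_j}$ are pairwise disjoint. Freezing the coordinates $\le N$ to agree with $x_0$ keeps a point inside $C$ and, for each $j$ with $\eps_j=1$, inside the cylinder matched by $x_0$, hence inside $U_{\alpha_j}$; on the disjoint high blocks one keeps, for each $j$ with $\eps_j=0$, an independent and (by the displayed product) positive chance of avoiding every $\varphi_i^{\alpha_j}$. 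Multiplying these contributions gives $\lambda(W)>0$, so every nonzero element of $\alg$ has positive measure.

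For non-$\sigma$-centredness I would argue by contradiction: if $\alg=\bigcup_n\CC_n$ with each $\CC_n$ centred, extend $\CC_n$ to an ultrafilter $x_n$ on $\alg$, so that every positive element lies in some $x_n$. Restricting $x_n$ to $\alg_0=\clop(\cantor)$ produces a point $p_n\in\cantor$, and $\{p_n\colon n\in\omega\}$, being countable, equals $P_\alpha$ for some $\alpha<\con$. For each $n$ we then have $p_n=t_i^{\alpha}$ for some $i$; the clopen set $[\varphi_i^{\alpha}]$ contains $t_i^\alpha$ and hence lies in $x_n$, and as $[\varphi_i^{\alpha}]\sub U_\alpha$ we get $U_\alpha\in x_n$, i.e.\ $F_\alpha\notin x_n$. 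Thus $F_\alpha$ is a positive element (positive measure by the previous step) belonging to no $x_n$ -- a contradiction. Only the growth of the blocks is used here, through $\lambda(F_\alpha)>0$; the almost disjoint family is not needed for this part.

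For the embedding I would partition $\omega=\bigcup_n I_n$ with $|I_n|=2^{r_n}$, identifying $I_n$ with $2^{r_n}$, where $r_n\to\infty$ slowly enough that each block is negligible against $|I_0|+\dots+|I_n|$ (so that asymptotic density reads off the limit of the level densities). Declaring that $s\in 2^{r_n}$ \emph{matches} $U_\alpha$ at level $n$ when $s\uhr\mathrm{dom}(\varphi_i^\alpha)=\varphi_i^\alpha$ for some block with $\mathrm{dom}(\varphi_i^\alpha)\sub\{0,\dots,r_n-1\}$, I would send the atom $W$ to the set whose level-$n$ part consists of those $s\in C$ matching exactly the $U_{\alpha_j}$ with $\eps_j=1$. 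For fixed $s$ all these conditions are decided by $s$, so the level-$n$ sets partition $I_n$ and the assignment is a Boolean homomorphism $\Psi\colon\alg\to\PP(\omega)/\fin$ as soon as it is well defined; its value on $W$ has asymptotic density $\lim_n\lambda(W_n)$, where $W_n$ is $W$ with each $U_{\alpha_j}$ replaced by $V^n_{\alpha_j}=\bigcup\{[\varphi_i^{\alpha_j}]\colon\mathrm{dom}(\varphi_i^{\alpha_j})\sub\{0,\dots,r_n-1\}\}$, and since $V^n_{\alpha_j}\to U_{\alpha_j}$ in measure one gets $\lambda(W_n)\to\lambda(W)$, whence $d(\Psi(A))=\lambda(A)$ for all $A\in\alg$; injectivity is then free, since $A\in\alg^+$ gives $\lambda(A)>0$, hence $d(\Psi(A))>0$ and $\Psi(A)$ infinite. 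The step I expect to be the main obstacle is well-definedness: I must check that an atom $W$ empty in $\cantor$ is sent to a \emph{finite} subset of $\omega$ (density $0$ is not enough). This is exactly where almost disjointness is essential -- because the matching conditions for distinct $\alpha_j$ use the pairwise disjoint tails of the $B_{\alpha_j}$ and are therefore asymptotically independent, the only way $W$ can be empty is that a positive requirement collides with $C$ or with a negative requirement through finitely many shared coordinates, a collision already visible at some finite level and forcing $W_n=\emptyset$, hence $\Psi(W)\cap I_n=\emptyset$, for all large $n$. Proving this dichotomy -- empty atoms are witnessed at a finite level, nonempty ones keep free coordinates on disjoint high blocks -- is the heart of the argument; once it is established, the homomorphism property, strict positivity, and the identity $d\circ\Psi=\lambda$ all fall out.
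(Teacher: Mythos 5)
Your first two bullets are essentially the paper's own arguments. The positivity proof by freezing an initial segment and exploiting independence of the pairwise disjoint high blocks is Lemma~\ref{positive} in slightly different clothing (the paper cuts at a block \emph{index} $N$ rather than a coordinate $N$, which sidesteps the block-straddling-the-freeze-point issue your version has to patch, and for the positive requirements $\eps_j=1$ you must also enlarge $N$ to swallow the witnessing cylinders of $x_0$ --- minor, fixable points). Your ultrafilter reformulation of non-$\sigma$-centredness (extend each centred piece to an ultrafilter, read off the points $p_n$, find $\alpha$ with $\{p_n\}=P_\alpha$, and exhibit $F_\alpha$ as a positive element in no $x_n$) is the exact dual of the paper's non-separability argument, with the same use of $P_\alpha$, $F_\alpha$ and $\lambda(F_\alpha)>0$.

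The embedding is where you genuinely diverge, and where there is a real gap. The paper builds $\Psi_0$ on $\clop(\cantor)$ by binary coding, defines $\Psi(U_\alpha)$ via Buck's theorem (Lemma~\ref{additive}), verifies Sikorski's criterion by induction on the number of positive generators, and proves $\lambda=d\circ\Psi$ separately (Proposition~\ref{transfer}); your interval coding, which reads the density of $\Psi(W)$ as $\lim_n\lambda(W_n)$, is a viable alternative that avoids Buck's theorem, and your reduction of everything to the dichotomy ``empty atoms die at a finite level'' is the right reduction. But you leave that dichotomy unproved, and the heuristic you offer for it misidentifies the mechanism. A positive requirement can never ``collide with $C$ or with a negative requirement'': by almost disjointness some cylinder $[\varphi_{K}^{\beta}]\sub U_{\beta}$ depends on coordinates disjoint from those controlling $C$ and all the other generators involved, and two nonempty sets depending on disjoint sets of coordinates always intersect --- this is precisely the paper's Claim~\ref{dependk} and the inductive step around it, and it shows positive requirements are \emph{irrelevant} to emptiness rather than a possible cause of it. Consequently the only source of emptiness is that $C$ is covered by the negative requirements, $C\sub\bigcup_{j}\bigcup_{i\in\omega}[\varphi_i^{\alpha_j}]$, and then compactness of $C$ gives the finite-stage cover $C\sub\bigcup_j\bigcup_{i\leq N}[\varphi_i^{\alpha_j}]$ which forces $W_n=\emptyset$ for all large $n$. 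So the ``heart of the argument'' you defer is exactly the paper's Sikorski-type induction; without it you have not shown $\Psi$ is well defined --- for an empty atom you would only know $d(\Psi(W))=0$, not that $\Psi(W)$ is finite, as you yourself note. Once that lemma is supplied, the rest of your plan (slowly growing $r_n$ so density is a weighted average of level densities, $\lambda(W_n)\to\lambda(W)$ from $\lambda(U_\alpha\setminus V^n_\alpha)\to 0$, injectivity from strict positivity) does go through.
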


We will prove the above theorem in a sequence of propositions and lemmas. We begin with an easy observation.

\begin{remark}\label{contains}
For each $\alpha<\con$ we have $P_{\alpha}\sub U_{\alpha}$, as $U_{\alpha}$ contains a basic neighbourhood of $t_{n}^{\alpha}$ for every $n\in\omega$.
\end{remark}

\begin{proposition}
The Boolean algebra $\alg$ is not $\sigma$-centered. 
\end{proposition}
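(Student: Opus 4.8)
The plan is to argue by contradiction through the dual interpretation recorded in the Preliminaries: $\alg$ is $\sigma$-centered precisely when $\alg^+$ can be written as a countable union $\bigcup_{n\in\omega}\mathcal{C}_n$ of centered families. So suppose such a decomposition exists, extend each $\mathcal{C}_n$ to an ultrafilter $d_n$ on $\alg$, and restrict $d_n$ to the subalgebra $\alg_0=\clop(\cantor)$. Since $\ult(\alg_0)=\cantor$, each restriction $d_n\cap\alg_0$ is the ultrafilter of clopen neighbourhoods of a unique point $x_n\in\cantor$; equivalently, $x_n$ is the image of $d_n$ under the continuous map $\ult(\alg)\to\cantor$ dual to $\alg_0\sub\alg$. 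The set $P=\{x_n\colon n\in\omega\}$ is a countable subset of $\cantor$, so by the choice of the enumeration there is some $\alpha<\con$ with $P=P_\alpha$.

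The first substantial step is to check that $F_\alpha$ is a nonzero element of $\alg$. Here the staggered block construction pays off: the coordinate sets on which $\varphi_0^\alpha,\varphi_1^\alpha,\ldots$ are defined are pairwise disjoint subsets of $B_\alpha$, so the cylinders $[\varphi_i^\alpha]$ are $\lambda$-independent, and $\lambda([\varphi_i^\alpha])=2^{-(i+1)}$ since $\varphi_i^\alpha$ fixes $i+1$ coordinates. Consequently
\[ \lambda(F_\alpha)=\lambda\Big(\bigcap_{i\in\omega}[\varphi_i^\alpha]^{\mathsf c}\Big)=\prod_{i\in\omega}\big(1-2^{-(i+1)}\big)>0, \]
the product being strictly positive because $\sum_i 2^{-(i+1)}<\infty$. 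In particular $F_\alpha\neq\emptyset$, so $F_\alpha\in\alg^+$.

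It remains to derive a contradiction by pinning down $U_\alpha$ in one of the ultrafilters from two sides. On the one hand, $F_\alpha\in\alg^+=\bigcup_n\mathcal{C}_n$ gives some $n_0$ with $F_\alpha\in\mathcal{C}_{n_0}\sub d_{n_0}$, and hence $U_\alpha=F_\alpha^{\mathsf c}\notin d_{n_0}$. On the other hand, $x_{n_0}\in P_\alpha\sub U_\alpha$ by Remark \ref{contains}, so $x_{n_0}\in[\varphi_i^\alpha]$ for some $i$; as $[\varphi_i^\alpha]$ is clopen it lies in $\alg_0$ and, being a neighbourhood of $x_{n_0}$, belongs to $d_{n_0}$. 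Since $[\varphi_i^\alpha]\sub U_\alpha$ as subsets of $\cantor$ and $d_{n_0}$ is upward closed, this forces $U_\alpha\in d_{n_0}$, a contradiction. Therefore no decomposition into countably many centered families exists, and $\alg$ is not $\sigma$-centered.

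I expect the only real content to sit in the computation $\lambda(F_\alpha)>0$; the rest is bookkeeping transporting membership back and forth between ultrafilters on $\alg$ and points of $\cantor$. The independence of the cylinders is exactly the feature the construction engineers, and it is what reduces $\lambda(F_\alpha)$ to a convergent infinite product bounded away from $0$. The only minor point to keep in mind is the degenerate case in which $P$ happens to be finite, which causes no trouble as long as the enumeration $\{P_\alpha\colon\alpha<\con\}$ ranges over all countable subsets of $\cantor$.
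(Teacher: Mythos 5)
Your argument is correct and is essentially the paper's proof in dual form: both project the countable candidate (a putative countable dense set there, ultrafilters extending the centered pieces here) to $\cantor$, pick $\alpha$ so that $P_\alpha$ equals the set of projections, and use $P_\alpha\sub U_\alpha$ (Remark \ref{contains}) to conclude that $F_\alpha$ is excluded from, respectively forced into, the relevant ultrafilters, which is the contradiction. Your explicit verification that $\lambda(F_\alpha)=\prod_{i\in\omega}\bigl(1-2^{-(i+1)}\bigr)>0$ via the disjointness of the coordinate blocks is a genuine (if small) addition: the paper's proof tacitly uses $F_\alpha\neq\emptyset$ --- needed for $\widehat{F_\alpha}$ to witness non-density --- and this does not follow from Lemma \ref{positive}, whose hypothesis already assumes the intersection nonempty, so your independence computation is exactly the missing justification.
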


\begin{proof}
We will switch to topological language and we will prove that the Stone space of $\alg$ is not separable.
Consider an arbitrary $\{x_{n}\colon n\in\omega\}\sub\ult(\alg)$. For every $n\in\omega$ let $t_{n}=x_{n}|_{\alg_{0}}\in\ult(\alg_{0})\simeq\cantor$, which means that $\{A\in\clop(\cantor)\colon t_{n}\in A\}=\{A\in\clop(\cantor)\colon A\in x_{n}\}$. 
There exists an $\alpha<\con$ such that $\{t_{n}\colon n\in\omega\}=P_{\alpha}$.
By Remark \ref{contains} we have $F_{\alpha}\cap P_{\alpha}=\emptyset$, so $t_{n}\notin F_{\alpha}$ for each $n\in\omega$. As $\cantor$ is a metric $0$-dimensional space, for every $n\in\omega$ there exists $A_{n}\in\clop(\cantor)$ satisfying
$t_{n}\in A_{n}$ and $A_{n}\cap F_{\alpha}=\emptyset$. Therefore, we have $A_{n}\in x_{n}$, hence $F_{\alpha}\notin x_{n}$ for every $n\in\omega$. Thus we have shown that $\widehat{F_{\alpha}}\cap\{x_{n}\colon n\in\omega\}=\emptyset$, which means that
$\{x_{n}\colon n\in\omega\}$ is not dense in $\ult(\alg)$.
\end{proof}

For each $\alpha<\con$ and $i\in\omega$ let $C_{i}^{\alpha}\sub\omega$ be the minimal set with the property $[\varphi_{i}^{\alpha}]\sim C_{i}^{\alpha}$ (so, e.g. $C_0^\alpha = \{m^\alpha_0\}$ and so on). 

\begin{remark}\label{depend}
For every $\alpha<\con$ we have $U_{\alpha}\sim B_{\alpha}$ (thus also $F_{\alpha}\sim B_{\alpha}$). Moreover, $C_{n}^{\alpha}\sub B_{\alpha}\backslash n$ for $\alpha<\con$ and $n\in\omega$.
\end{remark}

To prove that $\alg$ supports the measure $\lambda$, we need the following lemma.

\begin{lemma}\label{positive}
Assume that  $U\sub\cantor$ is an open set and the set $\{\alpha_{1},\ldots,\alpha_{m}\}\sub\con$ is such that 
$U\cap F_{\alpha_{1}}\cap\ldots\cap F_{\alpha_{m}}\neq\emptyset$. Then $\lambda\Big(U\cap F_{\alpha_{1}}\cap\ldots\cap F_{\alpha_{m}}\Big)>0$.
\end{lemma}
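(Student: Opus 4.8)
The plan is to exploit the almost disjointness of $\{B_\alpha\}$ to split each $F_{\alpha_j}$ into a ``head'' (finitely many of the defining constraints) and a ``tail'', chosen so that the tails of the different $F_{\alpha_j}$ live on \emph{pairwise disjoint} blocks of coordinates. On those coordinates the relevant events become $\lambda$-independent and contribute a strictly positive multiplicative factor, while the finitely many heads together with $U$ collapse to a nonempty clopen set depending on finitely many coordinates, which automatically has positive measure.

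First I would reduce to the case where $U$ is basic. Pick a point $x$ in the nonempty intersection $U\cap F_{\alpha_1}\cap\cdots\cap F_{\alpha_m}$; since $U$ is open there is $\psi\in\cantorfin$, say $\psi\in 2^l$, with $x\in[\psi]\sub U$. By monotonicity it suffices to prove $\lambda\big([\psi]\cap F_{\alpha_1}\cap\cdots\cap F_{\alpha_m}\big)>0$, and $x$ still witnesses nonemptiness. Using that $B_{\alpha_1},\ldots,B_{\alpha_m}$ are pairwise almost disjoint, choose $N\ge l$ so large that the sets $B_{\alpha_j}\sm N$ ($j=1,\ldots,m$) are pairwise disjoint. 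Recalling $F_{\alpha_j}=\bigcap_i[\varphi_i^{\alpha_j}]^\mathsf{c}$, split each factor at $N$:
\[ F_{\alpha_j}=G_j\cap H_j,\qquad G_j=\bigcap_{i<N}[\varphi_i^{\alpha_j}]^\mathsf{c},\quad H_j=\bigcap_{i\ge N}[\varphi_i^{\alpha_j}]^\mathsf{c}. \]

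By Remark \ref{depend}, $C_i^{\alpha_j}\sub B_{\alpha_j}\sm N$ whenever $i\ge N$, so $H_j$ depends only on coordinates in $B_{\alpha_j}\sm N$; as these sets are pairwise disjoint, so are the coordinate sets of $H_1,\ldots,H_m$. Let $W=[\psi]\cap G_1\cap\cdots\cap G_m$, which depends on the finite coordinate set $S=\{0,\ldots,l-1\}\cup\bigcup_j\bigcup_{i<N}C_i^{\alpha_j}$. The bookkeeping step I expect to be the only real obstacle is to verify that $S$ is disjoint from every $B_{\alpha_j}\sm N$: one uses that the blocks $C_i^{\alpha_j}$ partition $B_{\alpha_j}$ (separating the head from the tail of the same index) and that $B_{\alpha_{j'}}$ is disjoint from $B_{\alpha_j}\sm N$ for $j'\ne j$ (separating a head from an alien tail). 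Thus $W$ and $H_1,\ldots,H_m$ all depend on pairwise disjoint coordinate sets.

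The finish is then a clean independence computation. Since $[\psi]\cap F_{\alpha_1}\cap\cdots\cap F_{\alpha_m}=W\cap H_1\cap\cdots\cap H_m$ and the sets $W,H_1,\ldots,H_m$ are $\lambda$-independent, we get
\[ \lambda\big([\psi]\cap F_{\alpha_1}\cap\cdots\cap F_{\alpha_m}\big)=\lambda(W)\prod_{j=1}^m\lambda(H_j). \]
Each $H_j$ is an intersection of the independent events $[\varphi_i^{\alpha_j}]^\mathsf{c}$ (disjoint blocks $C_i^{\alpha_j}$), so $\lambda(H_j)=\prod_{i\ge N}(1-2^{-(i+1)})$, a tail of a convergent infinite product with all factors in $(0,1)$, hence strictly positive. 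Finally $W$ is a clopen set depending on the finite set $S$ and it contains $x$, so $W\ne\emptyset$ and therefore $\lambda(W)>0$. This yields $\lambda\big([\psi]\cap F_{\alpha_1}\cap\cdots\cap F_{\alpha_m}\big)>0$ and, by the reduction, the lemma.
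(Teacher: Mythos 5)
Your proof is correct and is essentially the paper's own argument: your $W$ is the paper's $X_N$, your choice of $N$ from almost disjointness plays the role of its Claim \ref{disjoint}, and the only cosmetic difference is that you evaluate the tails $H_j$ by an exact infinite product $\prod_{i\geq N}(1-2^{-(i+1)})$ where the paper uses the cruder union bound $1-\sum_{i>N}2^{-(i+1)}$. One micro-correction: $S$ need not be disjoint from $B_{\alpha_j}\sm N$ itself, since the head blocks $C_i^{\alpha_j}$ with $i<N$ may well contain elements $\geq N$; but $S$ is disjoint from the actual supports $\bigcup_{i\geq N}C_i^{\alpha_j}$ of the tails (head vs.\ tail of the same index by the partition of $B_{\alpha_j}$ into blocks, head of $j'$ vs.\ tail of $j\neq j'$ because $B_{\alpha_j}\cap B_{\alpha_{j'}}\sub N$), which is all the independence computation needs.
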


\begin{proof}	
Assume that $U$ and $\{\alpha_1, \dots, \alpha_m\}$ are as above. Without loss of generality we can assume that $U=[\tau]$, where $\tau\in 2^M$ for some $M\in \omega$. 

For every $n\in\omega$ define \[ X_{n}=[\tau]\big\backslash\bigcup_{j=1}^{m}\bigcup_{i=0}^{n}[\varphi_{i}^{\alpha_{j}}].\] 
Let \[I_{n}=M\cup\bigcup_{j=1}^{m}\bigcup_{i=0}^{n}C_{i}^{\alpha_{j}} \]
and observe that $I_n$ is the minimal set with property $X_n \sim I_n$.

According to our assumption
$[\tau]\nsubseteq\bigcup_{j=1}^{m}\bigcup_{i\in\omega}[\varphi_{i}^{\alpha_{j}}]$. Hence, for each $n$ the set $X_{n}$ is open and nonempty and so $\lambda(X_{n})>0$.

\begin{claim}\label{disjoint}
There exists $N\in\omega$ such that the family $\{I_{N}\}\cup\{C_{i}^{\alpha_{j}}\colon i>N, j\in\{1,\ldots,m\}\}$ is pairwise disjoint. 
\end{claim}

\begin{proof}
Since the family $\{B_{\alpha}\colon \alpha<\con\}$ is pairwise almost disjoint, there exists $N\geq M$ such that for every different $i, j\in\{1,\ldots,m\}$ we have $B_{\alpha_{i}}\cap B_{\alpha_{j}}\sub N$. Therefore, by Remark \ref{depend} we get 
$C_{k}^{\alpha_{i}}\cap C_{l}^{\alpha_{j}}=\emptyset$ for every different $i,j\in\{1,\ldots,m\}$, $k>N$ and arbitrary $l\in\omega$. If $i\in \{1,\ldots, m\}$ and $k\ne l$, then $C_k^{\alpha_i} \cap C_l^{\alpha_i} = \emptyset$. Moreover, again using Remark \ref{depend} we have $C_{k}^{\alpha_{j}}\sub\omega\backslash M$ for every $k>N$ and $j\in\{1,\ldots,m\}$.
\end{proof} 

Now we prove that the set $ U\cap F_{\alpha_{1}}\cap\ldots\cap F_{\alpha_{m}}= [\tau]\big\backslash\bigcup_{j=1}^{m}\bigcup_{i\in\omega}[\varphi_{i}^{\alpha_{j}}]$ is $\lambda$-positive:

\begin{multline*}
\lambda\Big( [\tau]\big\backslash\bigcup_{j=1}^{m}\bigcup_{i\in\omega}[\varphi_{i}^{\alpha_{j}}]\Big)=\lambda\Big(X_{N}\cap\bigcap_{j=1}^{m}\bigcap_{i=N+1}^{\infty}[\varphi_{i}^{\alpha_{j}}]^\mathsf{c}\Big)\stackrel{\star}{=}
\lambda(X_{N})\prod_{j=1}^{m}\lambda\Big(\bigcap_{i=N+1}^{\infty}[\varphi_{i}^{\alpha_{j}}]^\mathsf{c}\Big)\\\geq\lambda(X_{N})\Big(1-\sum_{i=N+1}^{\infty}\lambda([\varphi_{i}^{\alpha_{j}}])\Big)^m=
\lambda(X_{N})\big(1-2^{-(N+1)}\big)^m>0,
\end{multline*}
where the equality $(\star)$ holds because by Claim \ref{disjoint} the sets $X_N$ and $[\varphi^{\alpha_j}_i]$, for $j\in \{1,\dots,m\}$ and $i>N$,  depend on pairwise disjoint sets of coordinates.
\end{proof}

\begin{proposition}
The measure $\lambda$ is strictly positive on $\alg$.
\end{proposition}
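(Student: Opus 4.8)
The plan is to show that $\lambda(B)>0$ for every nonzero $B\in\alg^+$. First I would describe the general form of an arbitrary element of $\alg$. Since $\alg$ is generated by $\alg_0=\clop(\cantor)$ together with the open sets $\{U_\alpha\colon\alpha<\con\}$ (equivalently their complements $F_\alpha$), every element of $\alg$ is a finite Boolean combination of clopen sets and finitely many $U_\alpha$'s. Thus I would fix a finite set $\{\alpha_1,\dots,\alpha_k\}\sub\con$ of indices appearing in the expression for $B$ and write $B$ in disjunctive normal form with respect to the generators $F_{\alpha_1},\dots,F_{\alpha_k}$: that is, as a finite union of sets of the shape $C\cap\bigcap_{j\in S}F_{\alpha_j}\cap\bigcap_{j\notin S}U_{\alpha_j}$ for various subsets $S\sub\{1,\dots,k\}$ and clopen sets $C$.

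Next I would reduce to a single such elementary term. Since $\lambda$ is additive and these terms are disjoint, if $B$ is nonzero then at least one elementary term is nonempty, and it suffices to prove that a nonempty term has positive $\lambda$-measure. The remaining obstacle is that such a term mixes the sets $F_{\alpha_j}$ (for $j\in S$) with the \emph{complements} $U_{\alpha_j}=F_{\alpha_j}^\mathsf{c}$ (for $j\notin S$), whereas Lemma~\ref{positive} only handles intersections of the form $U\cap F_{\alpha_1}\cap\ldots\cap F_{\alpha_m}$ with $U$ open. I would handle this by absorbing the $U_{\alpha_j}$ factors into the open set: since each $U_{\alpha_j}$ is open and $C$ is clopen (hence open), the partial intersection $U:=C\cap\bigcap_{j\notin S}U_{\alpha_j}$ is again an open subset of $\cantor$. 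Then the elementary term equals $U\cap\bigcap_{j\in S}F_{\alpha_j}$, which is exactly the configuration covered by Lemma~\ref{positive}.

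With this reduction in place, the conclusion is immediate: the term is assumed nonempty, so $U\cap\bigcap_{j\in S}F_{\alpha_j}\neq\emptyset$, and Lemma~\ref{positive} yields $\lambda\big(U\cap\bigcap_{j\in S}F_{\alpha_j}\big)>0$. Since $B$ contains this positive-measure term, $\lambda(B)>0$, and as $B\in\alg^+$ was arbitrary, $\lambda$ is strictly positive on $\alg$.

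The main obstacle I anticipate is purely bookkeeping rather than conceptual: one must argue carefully that an arbitrary element of the generated algebra $\alg$ really does decompose into the clean normal form above, so that after grouping the open generators one lands precisely in the hypothesis of Lemma~\ref{positive}. Once the normal form is justified and the $U_{\alpha_j}$'s are folded into the open set $U$, there is nothing left to compute, since Lemma~\ref{positive} does all the measure-theoretic work.
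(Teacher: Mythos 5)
Your proposal is correct and follows essentially the same route as the paper: decompose an element of $\alg$ into a finite union of terms $C\cap U_{\beta_1}\cap\ldots\cap U_{\beta_n}\cap F_{\alpha_1}\cap\ldots\cap F_{\alpha_m}$, absorb the clopen set and the open $U_\beta$'s into a single open set $U$, and apply Lemma~\ref{positive} to the resulting set $U\cap F_{\alpha_1}\cap\ldots\cap F_{\alpha_m}$. The only cosmetic difference is that you insist on a disjoint normal form, which is unnecessary --- monotonicity of $\lambda$ already gives $\lambda(B)>0$ once one nonempty term is shown to be positive.
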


\begin{proof}
	Every element of $\alg$ is a finite union of elements of the form \[ C\cap U_{\beta_{1}}\cap\ldots\cap U_{\beta_{n}}\cap F_{\alpha_{1}}\cap\ldots\cap F_{\alpha_{m}}\] for some $C\in\clop(\cantor)$ and
$\alpha_{1},\ldots,\alpha_{n},\beta_{1},\ldots,\beta_{m}<\con$. Every such element is equal to $U\cap F_{\alpha_{1}}\cap\ldots\cap F_{\alpha_{m}}$ for some open $U\sub\cantor$, hence using Lemma \ref{positive} we get that it is
$\lambda$-positive (provided it is non-empty). Therefore, every nonzero $A\in\alg$ is $\lambda$-positive.
\end{proof}

We are going to find a Boolean embedding of the algebra $\alg$ into $\PP(\omega)/\fin$ transferring the Lebesgue measure to the asymptotic density. 
First, it is easy to see that there is an embedding $\Psi_0\colon \alg_0 \to \PP(\omega)/\fin$ with the above property. For example, it can be induced by $f\colon 2^{<\omega} \to P(\omega)$ such that $f(\sigma) = \{k\in \omega \colon k = \check{\sigma} \mod
2^n\}$, where $n$ is the length of $\sigma$ and $\check{\sigma}$ is the natural number represented by $\sigma$ in the binary system (such $f$ sends $[0]$ to even numbers, $[01]$ to numbers equal $1$ modulo $4$ and so on). 



We want now to extend $\Psi_{0}$ to a Boolean embedding $\Psi\colon \alg\to\PP(\omega)/\fin$. To define such $\Psi$, we need only to define $\Psi(U_{\alpha})$ for any $\alpha<\con$. We will use the following Lemma \ref{additive} proved as Theorem 1.1
in \cite{Buck53}:

\begin{lemma}\label{additive}
If $A_{0}\sub^{\ast}A_{1}\sub^{\ast}A_{2}\ldots\sub\omega$ is a sequence of sets having an asymptotic density, then there exists a set $A\sub\omega$ having an asymptotic density such that $A_{n}\sub^{\ast}A$ for all $n\in\omega$ and
$d(A)=\sup_{n\in\omega}d(A_{n})$.
\end{lemma}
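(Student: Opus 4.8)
The plan is to build $A$ by a single diagonal/interleaving construction along a rapidly increasing sequence of thresholds. First I would record the trivial monotonicity: since $A_k \sub^{\ast} A_{k+1}$ and passing to an almost-subset does not affect asymptotic density, the numbers $d_k := d(A_k)$ form a nondecreasing sequence in $[0,1]$, so $d^{\ast} := \sup_k d_k = \lim_k d_k$ exists and is the target value. I also note that, by transitivity of $\sub^{\ast}$, the set $A_j \sm A_k$ is finite whenever $j \le k$.

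Next I would fix an increasing sequence $0 = N_0 < N_1 < N_2 < \cdots$ chosen recursively so that at each stage $k$ three requirements hold: (i) a stabilization condition, $\big| |A_k \cap n|/n - d_k \big| < 1/k$ for all $n \ge N_k$, which is possible since $|A_k \cap n|/n \to d_k$; (ii) a rapid-growth condition, say $N_k \ge k \cdot \sum_{j<k} N_j$, forcing every earlier threshold to be negligible compared with $N_k$; and (iii) an inclusion condition, $N_k > \max \bigcup_{j \le k} (A_j \sm A_k)$, which makes sense because the right-hand side is a finite union of finite sets. I would then define
\[ A = \bigcup_{k \in \omega} \big( A_k \cap [N_k, N_{k+1}) \big), \]
so that on each block $[N_k, N_{k+1})$ the set $A$ simply copies $A_k$.

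The almost-inclusion $A_n \sub^{\ast} A$ I would verify directly: if $m \in A_n$ and $m \ge N_n$, then $m$ lies in a block $[N_k, N_{k+1})$ with $k \ge n$, and $m \ge N_k > \max(A_n \sm A_k)$ by (iii), so $m \notin A_n \sm A_k$; hence $m \in A_k$ and therefore $m \in A$. Thus $A_n \sm A \sub N_n$ is finite. For the density I would estimate $|A \cap n|$ for $n \in [N_k, N_{k+1})$ by writing $|A \cap n| = |A_k \cap n| + \big( |A \cap N_k| - |A_k \cap N_k| \big)$. Condition (i) gives $|A_k \cap n| = (d_k + O(1/k))\,n$. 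For the bracketed correction, condition (ii) guarantees that the last block $[N_{k-1}, N_k)$ fills all but a $1/k$-fraction of $[0, N_k)$, so the prefix $|A \cap N_k|$ differs from $|A_{k-1} \cap N_k| = (d_{k-1} + o(1)) N_k$ only by $o(N_k)$; since likewise $|A_k \cap N_k| = (d_k + o(1)) N_k$ and $d_{k-1}, d_k \to d^{\ast}$, the bracket is $o(N_k) = o(n)$. Letting $n \to \infty$ (hence $k \to \infty$) kills every error term and leaves $|A \cap n|/n \to d^{\ast}$, so $d(A)$ exists and equals $\sup_k d_k$.

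The main obstacle is that the two goals pull in opposite directions and must be reconciled by the single choice of thresholds: almost-inclusion wants each $N_k$ pushed far enough out that the finite defects $A_j \sm A_k$ have already been passed, whereas forcing the density to equal $d^{\ast}$ exactly — rather than merely $\underline{d}(A) \ge d^{\ast}$, which is automatic from $A_n \sub^{\ast} A$ — requires the tail blocks, on which $d_k$ is already near $d^{\ast}$, to dominate the running average; this is exactly what the rapid-growth condition (ii) buys. Conditions (i)--(iii) are mutually compatible because each one only asks that $N_k$ be chosen sufficiently large, so they can all be arranged simultaneously in the recursion.
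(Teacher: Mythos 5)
Your proof is correct, but note that there is no in-paper proof to match it against: the paper imports Lemma~\ref{additive} verbatim from Buck, citing it as Theorem~1.1 of \cite{Buck53}, so your argument is a self-contained replacement for an external citation rather than a rederivation of anything in the text. Checking it on its own terms: the almost-inclusion step is exactly right --- for $m\in A_n$ with $m\ge N_n$, the block index $k$ of $m$ satisfies $k\ge n$, and condition (iii) rules out $m\in A_n\setminus A_k$, so $A_n\setminus A\subseteq[0,N_n)$; and the density computation goes through at its one delicate point, the prefix correction $|A\cap N_k|-|A_k\cap N_k|$. Writing $|A\cap N_k|=|A\cap N_{k-1}|+|A_{k-1}\cap N_k|-|A_{k-1}\cap N_{k-1}|$, the two terms supported on $[0,N_{k-1})$ are at most $N_{k-1}\le N_k/k$ by (ii), and (i) applied to $A_{k-1}$ at the point $N_k\ge N_{k-1}$ gives $|A_{k-1}\cap N_k|=d_{k-1}N_k+O(N_k/(k-1))$; combined with $|A_k\cap N_k|=d_kN_k+O(N_k/k)$, the correction is $(d_{k-1}-d_k)N_k+O(N_k/k)=o(N_k)$, where the vanishing of $d_{k-1}-d_k$ comes from the convergence of the monotone bounded sequence $(d_k)$ that you established at the outset. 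Since $n\ge N_k$ and $k\to\infty$ as $n\to\infty$, this yields $|A\cap n|/n\to\sup_k d_k$ as claimed. Two cosmetic repairs: condition (i) is ill-formed at $k=0$, so start the recursion at $k=1$ (or use $1/(k+1)$), and all you ever use from (ii) is $N_{k-1}\le N_k/k$, so the sum over all $j<k$ is superfluous; likewise treat the maximum in (iii) as imposing no constraint when the union is empty. As for what each route buys: the citation keeps the paper short, while your block construction --- copy $A_k$ on $[N_k,N_{k+1})$ with thresholds chosen so that the latest block dominates the running average --- is elementary and makes transparent the genuinely nontrivial half of the statement, namely that $d(A)$ \emph{exists} and equals $\sup_k d_k$, rather than merely $\underline{d}(A)\ge\sup_k d_k$, which, as you observe, is automatic from $A_n\subseteq^*A$.
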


Fix $\alpha<\con$. Recall that $U_{\alpha}=\bigcup_{i\in\omega}[\varphi_{i}^{\alpha}]$ and let $B_{n}\sub\omega$ be a representative of the equivalence class of $\Psi_{0}\big(\bigcup_{i=0}^{n}[\varphi_{i}^{\alpha}]\big)$ for $n\in\omega$. The sequence $(B_{n}\colon
n\in\omega)$ meets the requirements of Lemma \ref{additive}, thanks to properties of $\Psi_{0}$. Let $B$ be the set, whose existence is proved by Lemma \ref{additive} for the sequence $(B_{n})_n$. We extend $\Psi_{0}$ by defining 
$\Psi_{0}(U_{\alpha})=B^\bullet$.

\begin{proposition}
The function $\Psi_{0}$ defined above on $\alg_{0}\cup\{U_{\alpha}\colon \alpha<\con\}$ can be extended to a Boolean homomorphism $\Psi:\alg\to\PP(\omega)/\fin$.
\end{proposition}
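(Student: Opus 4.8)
The plan is to apply the standard criterion for extending a map on a generating set to a Boolean homomorphism: since $\alg$ is generated by $\alg_0\cup\{U_\alpha\colon\alpha<\con\}$, the map $\Psi_0$ extends to a homomorphism into $\PP(\omega)/\fin$ provided that for every finite signed meet of generators (a meet of generators and complements of generators) that equals $0$ in $\alg$, the corresponding meet of $\Psi_0$-images is the zero element of $\PP(\omega)/\fin$. As $\Psi_0$ already acts as a homomorphism on $\alg_0$, every such meet reduces to the canonical form $E=C\cap U_{\beta_1}\cap\cdots\cap U_{\beta_n}\cap F_{\alpha_1}\cap\cdots\cap F_{\alpha_m}$ with $C\in\clop(\cantor)$. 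Thus it suffices to show: if $E=\emptyset$ in $\cantor$, then $\Psi_0(C)\cap\Psi_0(U_{\beta_1})\cap\cdots\cap\Psi_0(U_{\beta_n})\cap\Psi_0(U_{\alpha_1})^\mathsf{c}\cap\cdots\cap\Psi_0(U_{\alpha_m})^\mathsf{c}=0$.

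The crucial point I would establish first is that the positively occurring sets $U_{\beta_k}$ can never force $E$ to be empty; precisely, I would prove that $C\cap F_{\alpha_1}\cap\cdots\cap F_{\alpha_m}\neq\emptyset$ already implies $E\neq\emptyset$. Starting from a point $y$ in the former set, I would use the almost disjointness of $\{B_\alpha\colon\alpha<\con\}$ together with Remark \ref{depend} to pick, for each $k$, an index $i_k$ so large that the block $C_{i_k}^{\beta_k}$ is disjoint from $I_C$ (where $C\sim I_C$) and from $B_{\alpha_1},\ldots,B_{\alpha_m}$ and from the blocks chosen for the other $\beta_{k'}$; this is possible because all the relevant pairwise intersections are finite while the blocks $C_i^{\beta_k}$ are pairwise disjoint and tend to infinity. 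Overwriting the coordinates of $y$ on these fresh, pairwise disjoint blocks by the patterns $\varphi_{i_k}^{\beta_k}$ produces a point that lies in every $[\varphi_{i_k}^{\beta_k}]\subseteq U_{\beta_k}$, while its membership in $C$ and in each $F_{\alpha_j}$ is preserved because only coordinates outside $I_C\cup\bigcup_j B_{\alpha_j}$ were altered. Hence this modified point witnesses $E\neq\emptyset$.

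Granting this claim, $E=\emptyset$ forces $C\cap F_{\alpha_1}\cap\cdots\cap F_{\alpha_m}=\emptyset$, that is $C\subseteq U_{\alpha_1}\cup\cdots\cup U_{\alpha_m}$. Since $C$ is clopen, hence compact, and each $U_{\alpha_j}=\bigcup_i[\varphi_i^{\alpha_j}]$ is open, compactness yields a finite $L$ with $C\subseteq\bigcup_{j=1}^m\bigcup_{i\le L}[\varphi_i^{\alpha_j}]$. Applying the homomorphism $\Psi_0$ on $\alg_0$ to this inclusion of clopen sets gives $\Psi_0(C)\le\bigcup_{j=1}^m\Psi_0\big(\bigcup_{i\le L}[\varphi_i^{\alpha_j}]\big)$ in $\PP(\omega)/\fin$; and by the very construction of $\Psi_0(U_{\alpha_j})$ through Lemma \ref{additive} we have $\Psi_0\big(\bigcup_{i\le L}[\varphi_i^{\alpha_j}]\big)\le\Psi_0(U_{\alpha_j})$ for each $j$. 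Therefore $\Psi_0(C)\le\bigcup_j\Psi_0(U_{\alpha_j})$, so $\Psi_0(C)\cap\bigcap_j\Psi_0(U_{\alpha_j})^\mathsf{c}=0$, and a fortiori the full image meet of the first paragraph vanishes, the $U_{\beta_k}$-factors being irrelevant.

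I expect the genuine obstacle to be the reduction of the second paragraph rather than the compactness argument of the third. The temptation is to compute the density of the image meet and observe that it is $0$, but density $0$ is strictly weaker than being finite, so that route cannot by itself yield a homomorphism into $\PP(\omega)/\fin$. The content of the wiggling argument is exactly that emptiness of $E$ is never a density-zero phenomenon produced by the open sets $U_{\beta_k}$: it is always witnessed at a finite level on the $F_{\alpha_j}$-side, which is what allows the clean compactness-and-Lemma \ref{additive} argument to replace any limiting process and deliver genuine finiteness.
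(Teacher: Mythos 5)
Your proposal is correct and follows essentially the same route as the paper: both apply Sikorski's Extension Criterion, reduce the emptiness of $C\cap\bigcap_k U_{\beta_k}\cap\bigcap_j F_{\alpha_j}$ to that of $C\cap\bigcap_j F_{\alpha_j}$ via the coordinate-disjointness coming from the almost disjointness of $\{B_\alpha\colon\alpha<\con\}$ and Remark \ref{depend}, and then conclude by compactness of $C$ together with the property $\Psi_0\big(\bigcup_{i\le L}[\varphi_i^{\alpha_j}]\big)\le\Psi_0(U_{\alpha_j})$ supplied by Lemma \ref{additive}. The only (cosmetic) difference is that you eliminate all the positively occurring $U_{\beta_k}$ at once by explicitly overwriting a witnessing point on fresh pairwise disjoint blocks $C_{i_k}^{\beta_k}$, whereas the paper strips them one at a time by induction, using at each step the observation that two nonempty sets depending on disjoint sets of coordinates must intersect.
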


\begin{proof}


By Sikorski's Extension Criterion (see \cite[Theorem 5.5]{Handbook-Boolean}) and the definition of $\alg$ we need only to prove that
\begin{equation}\label{indukcja}
\Big(C\cap\bigcap_{i=1}^{n} U_{\beta_{i}}\cap\bigcap_{j=1}^{m} F_{\alpha_{j}}=\emptyset\Big)\Longrightarrow\Big(\Psi_{0}(C)\cap\bigcap_{i=1}^{n}\Psi_{0}(U_{\beta_{i}})\cap\bigcap_{j=1}^{m}\big(\Psi_{0}(U_{\alpha_{j}})\big)^\mathsf{c}=\mathbb{0}\Big) 
\end{equation}
for every $C\in\clop(\cantor)$ and $\alpha_{1},\ldots,\alpha_{m},\beta_{1},\ldots,\beta_{n}<\con$. We shall proceed by induction on $n$.

For $n=0$ the assumption $C\cap\bigcap_{j=1}^{m} F_{\alpha_{j}}=\emptyset$ means that $C\sub\bigcup_{j=1}^{m} U_{\alpha_{j}}$, so by the definition of $U_{\alpha}$'s we get $C\subseteq\bigcup_{j=1}^{m}\bigcup_{i\in\omega}[\varphi_{i}^{\alpha_{j}}]$.
As $C$ is compact, there exists $N\in\omega$ such that $C\subseteq\bigcup_{j=1}^{m}\bigcup_{i=0}^{N}[\varphi_{i}^{\alpha_{j}}]$, hence by the properties of $\Psi_{0}$ on $\clop(\cantor)$ we have
$\Psi_{0}(C)\leq\bigcup_{j=1}^{m}\Psi_{0}\big(\bigcup_{i=0}^{N}[\varphi_{i}^{\alpha_{j}}]\big)$.
 By Lemma \ref{additive} we know that $\Psi_{0}\big(\bigcup_{i=0}^{n}[\varphi_{i}^{\alpha}]\big)\leq\Psi_{0}(U_{\alpha})$ for each
$\alpha<\con$ and $n\in\omega$. Therefore, we get $\Psi_{0}(C)\leq\bigcup_{j=1}^{m}\Psi_{0}(U_{\alpha_{j}})$ and so $\Psi_{0}(C)\cap\bigcap_{j=1}^{m}\big(\Psi_{0}(U_{\alpha_{j}})\big)^\mathsf{c}=\mathbb{0}$.

Assume now that we have proved (\ref{indukcja}) for every element of the form $C\cap\bigcap_{i=1}^{n} U_{\beta_{i}}\cap\bigcap_{j=1}^{m} F_{\alpha_{j}}$, where $n\leq N$. Take 
 $C\in\clop(\cantor)$ and $\alpha_{1},\ldots,\alpha_{m},\beta_{1},\ldots,\beta_{N+1}<\con$ such that

\begin{equation}\label{ind}
C\cap\bigcap_{i=1}^{N+1} U_{\beta_{i}}\cap\bigcap_{j=1}^{m} F_{\alpha_{j}}=\emptyset.
\end{equation}

\begin{claim}\label{dependk}
There exists such $K\in\omega$ that the sets $[\varphi_{K}^{\beta_{N+1}}]$ and $C\cap\bigcap_{i=1}^{N} U_{\beta_{i}}\cap\bigcap_{j=1}^{m} F_{\alpha_{j}}$  depend on disjoint sets of coordinates.
\end{claim}

\begin{proof}
We have $C\sim l$ for some $l\in\omega$. As $\{B_{\alpha}\colon \alpha<\con\}$ is an almost disjoint family, there exists $K\geq l$ such that $B_{\beta_{N+1}}\backslash K$ is disjoint from $\bigcup_{i=1}^{N}B_{\beta_{i}}\cup\bigcup_{j=1}^{m}B_{\alpha_{j}}$. By 
Remark \ref{depend} we have $[\varphi_{K}^{\beta_{N+1}}]\sim C_{K}^{\beta_{N+1}}\sub B_{\beta_{N+1}}\backslash K$ and \[ C\cap\bigcap_{i=1}^{N} U_{\beta_{i}}\cap\bigcap_{j=1}^{m} F_{\alpha_{j}}\sim
l\cup\bigcup_{i=1}^{N}B_{\beta_{i}}\cup\bigcup_{j=1}^{m}B_{\alpha_{j}},\] thus the proof is complete.
\end{proof}

Fix such $K\in\omega$. As $[\varphi_{K}^{\beta_{N+1}}]\sub U_{\beta_{N+1}}$, by (\ref{ind}) we get 
\[\Big(C\cap\bigcap_{i=1}^{N} U_{\beta_{i}}\cap\bigcap_{j=1}^{m} F_{\alpha_{j}} \Big)\cap[\varphi_{K}^{\beta_{N+1}}]=\emptyset.\]
If two sets depending on disjoint sets of coordinates have empty intersection, then at least one of them has to be empty. 
As $[\varphi_{K}^{\beta_{N+1}}]\neq\emptyset$, we get
\[C\cap\bigcap_{i=1}^{N} U_{\beta_{i}}\cap\bigcap_{j=1}^{m} F_{\alpha_{j}}=\emptyset.\]
Now, by inductive assumption 
\[\Psi_{0}(C)\cap\bigcap_{i=1}^{N}\Psi_{0}(U_{\beta_{i}})\cap\bigcap_{j=1}^{m}\big(\Psi_{0}(U_{\alpha_{j}})\big)^\mathsf{c}=\mathbb{0},\]
hence
\[\Psi_{0}(C)\cap\bigcap_{i=1}^{N+1}\Psi_{0}(U_{\beta_{i}})\cap\bigcap_{j=1}^{m}\big(\Psi_{0}(U_{\alpha_{j}})\big)^\mathsf{c}=\mathbb{0}.\]
Thus, we have proved (\ref{indukcja}) and we are done.
\end{proof}

Now we shall prove that the homomorphism $\Psi\colon \alg\to\PP(\omega)/\fin$ transfers the Lebesgue measure to the asymptotic density. In this proof we will not use any specific properties of $U_{\alpha}$'s, but rather the properties of $\Psi_{0}$ on $\alg_{0}$ and the definition of $\Psi(U_{\alpha})$ for $\alpha<\con$.

\begin{proposition}\label{transfer}
For every $A\in\alg$ we have $\lambda(A)=d(\Psi(A))$.
\end{proposition}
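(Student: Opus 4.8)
The plan is to verify the measure-transfer identity $\lambda(A)=d(\Psi(A))$ on a multiplicatively closed generating family and then lift it to all of $\alg$ by finite additivity. As the author remarks, we do not exploit the specific combinatorial structure of the $U_\alpha$'s here; we use only that $\Psi_0$ transfers $\lambda$ to $d$ on $\alg_0=\clop(\cantor)$ and that $\Psi(U_\alpha)$ was defined via Lemma \ref{additive} so that $d(\Psi(U_\alpha))=\sup_n d\big(\Psi_0(\bigcup_{i=0}^n[\varphi_i^\alpha])\big)=\sup_n\lambda\big(\bigcup_{i=0}^n[\varphi_i^\alpha]\big)=\lambda(U_\alpha)$. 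So the identity already holds on each generator individually; the work is to propagate it through intersections.

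First I would reduce to showing the identity on elements of the product form
\[
E=C\cap U_{\beta_1}\cap\dots\cap U_{\beta_n}\cap F_{\alpha_1}\cap\dots\cap F_{\alpha_m},
\]
since every element of $\alg$ is a finite disjoint union of such pieces (refine an arbitrary finite union into a disjoint one using the atoms of the finite subalgebra generated by the relevant $C$, $U$'s and $F$'s, which is how one argues additivity on both the $\lambda$ side and the $d$ side). For such $E$, the crucial point is an asymptotic-independence phenomenon driven by almost disjointness: exactly as in Claim \ref{disjoint} and Claim \ref{dependk}, there is an $N$ beyond which the coordinate sets $C_i^{\alpha}$ and $C_i^{\beta}$ governing the tails of the $U$'s and $F$'s become pairwise disjoint, and $\Psi_0$ sends sets depending on disjoint coordinate blocks to sets whose densities multiply. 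Concretely I expect to approximate $U_{\beta_i}$ from below by $\bigcup_{k\le K}[\varphi_k^{\beta_i}]$ (clopen, hence handled by $\Psi_0$) and $F_{\alpha_j}$ from below by its clopen truncations, control the tails by the geometric estimate $\sum_{i>N}\lambda([\varphi_i^\alpha])=2^{-(N+1)}$ already used in Lemma \ref{positive}, and take $K,N\to\infty$.

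The main obstacle is precisely the multiplicativity: one must show that for the images, $d\big(\Psi(E)\big)$ factors as $\lambda(C)\prod_i\lambda(U_{\beta_i})\prod_j\lambda(F_{\alpha_j})$ asymptotically, matching the product formula $(\star)$ from Lemma \ref{positive} on the $\lambda$ side. This is not automatic because $d$ is only finitely additive, not a genuine product measure, and densities of intersections need not multiply in general. The identity is rescued by almost disjointness of $\{B_\alpha\}$: once the generating coordinate blocks are disjoint past some $N$, the clopen approximants $\Psi_0(\cdot)$ of the truncated factors become \emph{stochastically independent} with respect to $d$ in the sense inherited from $\Psi_0$ on $\alg_0$ (where $d$ restricted to the relevant periodic sets behaves like a product of normalized counting measures on disjoint residue systems). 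I would therefore carry out the estimate by squeezing: bound $\Psi(E)$ between a clopen under-approximant and a clopen over-approximant whose $d$-values differ by at most $C\cdot 2^{-(N+1)}$, compute each approximant's density multiplicatively via $\Psi_0$, and let $N\to\infty$ so that both the upper and lower estimates converge to $\lambda(E)$. The continuity needed to pass the limit through $d$ is supplied by Lemma \ref{additive} (for the $U$-factors from below) together with the finite additivity of $d$ and the uniform geometric tail bound.
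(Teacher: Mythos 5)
Your outline shares its skeleton with the paper's proof -- reduce to elements of the form $E=C\cap U_{\beta_1}\cap\dots\cap U_{\beta_n}\cap F_{\alpha_1}\cap\dots\cap F_{\alpha_m}$, approximate the $U$'s from below by the clopen truncations $C_K=\bigcup_{i\leq K}[\varphi_i^\beta]$, control tails, and finish by disjointification and finite additivity -- but what you identify as the ``main obstacle'' is a wrong turn. The asymptotic factorization $d(\Psi(E))\approx\lambda(C)\prod_i\lambda(U_{\beta_i})\prod_j\lambda(F_{\alpha_j})$ is false, and it is not what $(\star)$ in Lemma \ref{positive} asserts: there only the tail pieces beyond $N$ factor out, while all the heads are absorbed into the single set $X_N$. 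Indeed $\lambda(U_\alpha)<1$, so if $C\subseteq[\varphi_0^{\beta_1}]$ then $\lambda(C\cap U_{\beta_1})=\lambda(C)>\lambda(C)\lambda(U_{\beta_1})$. More importantly, no independence or multiplicativity of $d$ is needed anywhere: since $\Psi_0$ transfers $\lambda$ to $d$ \emph{exactly} on $\clop(\cantor)$, any product formula valid for $\lambda$ on clopen sets transfers verbatim to densities, and the paper's proof of Proposition \ref{transfer} uses neither almost disjointness of $\{B_\alpha\}$ nor Claims \ref{disjoint} and \ref{dependk} -- it explicitly uses only the properties of $\Psi_0$ on $\alg_0$ and the definition of $\Psi(U_\alpha)$ via Lemma \ref{additive}. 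Those claims belong to the strict-positivity and homomorphism-extension arguments, not to the measure-transfer one.

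Your squeeze also has a concrete technical failure: each $U_\alpha$ is \emph{dense} open in $\cantor$ (given any finite $\sigma$, pick $i$ with $C_i^\alpha\cap\mathrm{dom}(\sigma)=\emptyset$; then $[\varphi_i^\alpha]\cap[\sigma]\neq\emptyset$), so its only clopen superset is $\cantor$ itself, and the promised ``clopen over-approximant'' of $E$ with density within $O(2^{-N})$ of $\lambda(E)$ does not exist; likewise the truncations $D_n=\big(\bigcup_{i\leq n}[\varphi_i^\alpha]\big)^{\mathsf{c}}$ approximate $F_\alpha$ from \emph{above}, not from below as you state. The correct replacement for the upper estimate -- and the paper's actual mechanism -- is the identity $\Psi(A'\cap U_\alpha)=\Psi(A'\cap C_n)\cup\big(\Psi(A')\cap\Psi(U_\alpha\setminus C_n)\big)$ combined with $d\big(\Psi(U_\alpha\setminus C_n)\big)\to 0$, which follows from $d(\Psi(U_\alpha))=\sup_n d(\Psi(C_n))$, i.e. from the way $\Psi(U_\alpha)$ was defined through Lemma \ref{additive}; this yields $d(\Psi(A'\cap U_\alpha))=\sup_n d(\Psi(A'\cap C_n))=\lambda(A'\cap U_\alpha)$ with no independence input, and a dual $\inf$ argument handles $F_\alpha$. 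The paper packages this as an induction over generators: it shows the family $\alg'$ of all $A$ with $\lambda(A\cap C)=d(\Psi(A\cap C))$ for every clopen $C$ is closed under intersection with each $U_\alpha$ and $F_\alpha$, and then applies your final disjointification-plus-additivity step. Once the false factorization claim is deleted and the over-approximation is replaced by the correction-term estimate above, your proposal collapses to exactly that proof.
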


\begin{proof}
Let \[\alg' = \{A\in \alg\colon \lambda(A \cap C) = d(\Psi(A \cap C)) \mbox{ for each }C\in \mathrm{Clop}(2^\omega)\}.\] 

\begin{claim}
If $A\in \alg'$, then $A\cap U_\alpha \in \alg'$ and $A\cap F_\alpha \in \alg'$ for each $\alpha<\con$.
\end{claim}

\begin{proof}
	Let $A\in \alg'$ and $\alpha<\con$. We will only show that $A\cap U_\alpha \in \alg'$. The proof for $F_\alpha$ is analogous. Fix $C\in \mathrm{Clop}(2^\omega)$.

Denote $C_{n}=\bigcup_{i=0}^{n}[\varphi_{i}^{\alpha}]$, so that we have $U_{\alpha}=\bigcup_{n=0}^{\infty}C_{n}$, where the union is increasing. 
Since $A\in \alg'$ and $C \cap C_n$ is clopen for each $n$ we have
\begin{equation}\label{jeden}
\lambda\Big(A\cap C \cap U_{\alpha}\Big)=\sup_{n\in\omega}\lambda\Big(A\cap C \cap C_{n}\Big)=\sup_{n\in\omega}d\bigg(\Psi\Big(A\cap C \cap C_{n}\Big)\bigg).
\end{equation}
Denote $A' = A \cap C$. As $\Psi$ is a homomorphism, for every $n\in\omega$ we have 
$\Psi(A' \cap C_{n})\leq\Psi(A' \cap U_{\alpha})$,
and thus
\begin{equation}\label{psidown}
\sup_{n\in\omega}d\bigg(\Psi\Big(A' \cap C_{n}\Big)\bigg)\leq d\bigg(\Psi\Big(A' \cap U_\alpha\Big)\bigg).
\end{equation}
Also, $A' \cap U_{\alpha} = (A' \cap C_{n})\cup \Big(A' \cap(U_{\alpha}\backslash C_{n})\Big)$ for each $n\in\omega$
and hence
\begin{equation}\label{psiup}
\Psi(A' \cap U_{\alpha}) = \Psi(A' \cap C_{n})\cup \Big(\Psi(A')\cap\Psi(U_{\alpha}\backslash C_{n})\Big).
\end{equation}
By the definition of $\Psi$ we get $d(\Psi(U_{\alpha}))=\sup_{n\in\omega}d(\Psi(C_{n}))$ for every $\alpha<\con$ and so
\[d\Big(\Psi(U_\alpha \backslash C_{n})\Big)\limN 0.\]

Therefore, we have 
\begin{equation}\label{oszac}
 d\Big(\Psi(A')\cap\Psi(U_{\alpha}\backslash C_{n})\Big)\limN 0. 
\end{equation}
Finally, by (\ref{psidown}), (\ref{psiup}), (\ref{oszac}) and the definition of $A'$ we get
\[d\bigg(\Psi\Big(C\cap A \cap U_{\alpha}\Big)\bigg)=\sup_{n\in\omega}d\bigg(\Psi\Big(C\cap A \cap C_{n}\Big)\bigg),\]
and so by (\ref{jeden}) we have
\begin{equation}\label{open}
d\bigg(\Psi\Big(C\cap (A \cap U_{\alpha})\Big)\bigg)=\lambda\Big(C\cap(A\cap U_{\alpha})\Big).
\end{equation}
As $C$ was arbitrary, $A\cap U_\alpha\in \alg'$. 

We can show that $A\cap F_\alpha\in \alg'$, using the fact that $\lambda\Big(A\cap C \cap F_{\alpha}\Big)=\inf_{n\in\omega}\lambda\Big(A\cap C \cap D_{n}\Big)$ for every $C\in \mathrm{Clop}(2^\omega)$
and $D_n = C^c_n$ for each $n$ and proceeding in the similar manner as above.
\end{proof}
Now, subsequently using the above claim we can show that if \[ A = C\cap U_{\beta_{1}}\cap\ldots\cap U_{\beta_{k}}\cap F_{\alpha_{1}}\cap\ldots\cap F_{\alpha_{m}},\] where $C\in\clop(\cantor)$ and
$\alpha_{1},\ldots,\alpha_{m},\beta_{1},\ldots,\beta_{k}<\con$, then $A\in \alg'$ and so, in particular, $\lambda(A) = d(\Psi(A))$.
Finally, as every element of $\alg$ can be written as a disjoint union of elements of the form as above, we prove the statement of Proposition \ref{transfer} by the fact that the Lebesgue measure and the asymptotic density are finitely additive.
\end{proof}

\begin{corollary}
The homomorphism $\Psi\colon \alg\to\PP(\omega)/\fin$ is injective and so it is a Boolean embedding.
\end{corollary}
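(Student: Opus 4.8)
The plan is to deduce injectivity of $\Psi$ from the two facts already established: that $\lambda$ is strictly positive on $\alg$, and that $\Psi$ transfers the Lebesgue measure to the asymptotic density (Proposition \ref{transfer}). Since $\Psi$ is a Boolean homomorphism, it suffices to verify that its kernel is trivial, i.e. that $\Psi(A)=\mathbb{0}$ forces $A=0$ in $\alg$. For a Boolean homomorphism this is equivalent to injectivity: if $\Psi(A)=\Psi(B)$, then, using compatibility of $\Psi$ with symmetric differences, $\Psi(A\btu B)=\Psi(A)\btu\Psi(B)=\mathbb{0}$, so a trivial kernel yields $A\btu B=0$, that is $A=B$.

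First I would take an arbitrary $A\in\alg$ with $\Psi(A)=\mathbb{0}$. Applying the transfer identity of Proposition \ref{transfer} gives $\lambda(A)=d(\Psi(A))=d(\mathbb{0})=0$. Then I would invoke strict positivity of $\lambda$ on $\alg$: since $\lambda(B)>0$ for every nonzero $B\in\alg$, the equality $\lambda(A)=0$ forces $A=0$. Hence the kernel of $\Psi$ is trivial, and by the remark of the previous paragraph $\Psi$ is injective; being an injective Boolean homomorphism, it is a Boolean embedding.

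I do not anticipate any genuine obstacle here, since the corollary is a direct combination of the preceding results; the substantive work was already carried out in establishing strict positivity of $\lambda$ on $\alg$ (via Lemma \ref{positive}) and the measure-transfer identity of Proposition \ref{transfer}. The only point deserving a word of care is the elementary observation that, for a Boolean homomorphism, injectivity coincides with having trivial kernel, which is immediate from the fact that $\Psi$ preserves symmetric differences together with the identity $A\btu B=0\iff A=B$.
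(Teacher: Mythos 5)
Your proposal is correct and follows exactly the paper's own argument: from $\Psi(A)=\mathbb{0}$ deduce $\lambda(A)=d(\Psi(A))=0$ via Proposition \ref{transfer}, then conclude $A=0$ by strict positivity of $\lambda$ on $\alg$. Your additional remark that trivial kernel implies injectivity for Boolean homomorphisms (via symmetric differences) is a point the paper leaves implicit, but it is the same standard reasoning.
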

\begin{proof}
Assume that $A\in\alg$ satisfies $\Psi(A)=\mathbb{0}$. We have $d(\Psi(A))=0$, hence by Proposition \ref{transfer} we get $\lambda(A)=0$, which implies, by Proposition \ref{positive}, that $A=\emptyset$.
\end{proof}

\section{Construction using slaloms.}\label{section:3}

In this section we present another construction announced in the introduction. It is motivated by \cite[Theorem 8.4]{Todorcevic}. In \cite{Pbn-Tanmay} the authors showed that the constructed Boolean algebra supports a strictly positive measure using the following theorem due to
Kamburelis: 

\begin{theorem}
	A Boolean algebra $\mathfrak{A}$ supports a measure if and only if there is a measure algebra $\mathbb{M}$ such that $\Vdash_\mathbb{M} "\check{\mathfrak{A}}\mbox{ is }\sigma\mbox{-centered}"$. 
\end{theorem}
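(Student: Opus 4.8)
The plan is to prove the two implications separately, in both cases exploiting the generic point added by a measure algebra as a source of random ultrafilters on $\mathfrak A$. Throughout I use that ``supports a measure'' means ``carries a strictly positive finitely additive probability measure'', and that a Boolean algebra is $\sigma$-centered precisely when there are countably many ultrafilters on it whose union covers every nonzero element.

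For the implication $(\Rightarrow)$, suppose $\mu$ is a strictly positive measure on $\mathfrak A$ and let $K=\ult(\mathfrak A)$, equipped with the $\sigma$-additive Borel extension $\overline\mu$ of $\widehat\mu$ furnished in Section~\ref{preliminaries}, so that $\overline\mu(\widehat a)=\mu(a)>0$ for every $a\in\mathfrak A^+$. I would take $\mathbb M$ to be the measure algebra $\mathrm{Bor}(K^\omega)/\mathcal N$ of the product measure $\overline\mu^{\,\omega}$ on $K^\omega$; this is a measure algebra, and its generic object is a sequence $(\dot x_n)_{n\in\omega}$ of mutually independent random points of $K$, each of which restricts to an ultrafilter on $\check{\mathfrak A}$ via $a\in\dot x_n \iff \dot x_n\in\widehat a$. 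For a fixed ground-model $a\in\mathfrak A^+$, the set of sequences with $a\notin x_n$ for all $n$ is $\prod_n(K\setminus\widehat a)$, of product measure $\prod_n(1-\mu(a))=0$; hence the top condition of $\mathbb M$ forces $\exists n\,(\check a\in\dot x_n)$. Since the elements of $\check{\mathfrak A}$ are exactly the $\check a$ and ``$a\neq 0$'' is absolute, it follows that $\Vdash_{\mathbb M}\forall a\in\check{\mathfrak A}^+\,\exists n\,(a\in\dot x_n)$, so the families $\dot x_n$ witness that $\Vdash_{\mathbb M}$ ``$\check{\mathfrak A}$ is $\sigma$-centered''.

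For the implication $(\Leftarrow)$, suppose $\mathbb M$ is a measure algebra with strictly positive $\sigma$-additive measure $\nu$ and $\Vdash_{\mathbb M}$ ``$\check{\mathfrak A}$ is $\sigma$-centered''. In the extension every centered cover refines to a countable family of ultrafilters, so I would fix names $\dot U_n$ ($n\in\omega$) for ultrafilters on $\check{\mathfrak A}$ with $\Vdash\,\forall a\in\check{\mathfrak A}^+\,\exists n\,(a\in\dot U_n)$. For each $n$ define $\mu_n(a)=\nu\big(\|\check a\in\dot U_n\|\big)$, where $\|\cdot\|$ is the Boolean value in $\mathbb M$. Because $\dot U_n$ is forced to be an ultrafilter (equivalently a homomorphism $\check{\mathfrak A}\to 2$), for disjoint $a,b$ the values $\|\check a\in\dot U_n\|$ and $\|\check b\in\dot U_n\|$ are disjoint with join $\|\check a\vee\check b\in\dot U_n\|$, whence each $\mu_n$ is a finitely additive probability measure on $\mathfrak A$. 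Setting $\mu=\sum_{n}2^{-(n+1)}\mu_n$ yields a finitely additive probability measure, and for $a\in\mathfrak A^+$ the covering property gives $\bigvee_n\|\check a\in\dot U_n\|=1_{\mathbb M}$; by $\sigma$-additivity of $\nu$ some summand has positive $\nu$-measure, so $\mu(a)>0$ and $\mu$ is strictly positive.

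I expect the main obstacle to be in the $(\Rightarrow)$ direction, precisely the passage from ``each fixed ground-model element is covered almost surely'' to ``every element of $\check{\mathfrak A}$ is forced to be covered''. The point is that the forcing formulation internalises an otherwise uncountable intersection of full-measure events: because the top condition forces the covering statement for each $\check a$ separately, the forcing theorem delivers the universally quantified statement over the (unchanged) algebra $\check{\mathfrak A}$. Checking that the generic sequence really produces independent \emph{ultrafilters} on $\check{\mathfrak A}$, and that $\mathrm{Bor}(K^\omega)/\mathcal N$ is a genuine measure algebra, are the remaining routine verifications.
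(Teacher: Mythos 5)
Your proof is correct, but note that the paper itself offers no proof to compare against: it states this result as a known theorem of Kamburelis and deliberately routes around it, establishing the existence of a strictly positive measure on its example via Kelley's intersection-number criterion instead. Measured against the standard proof of Kamburelis's theorem, your argument is essentially the canonical one, and both directions check out. For $(\Leftarrow)$: by fullness of the Boolean-valued model over the complete algebra $\mathbb M$ you may indeed fix names $\dot U_n$ for ultrafilters covering $\check{\mathfrak A}^+$ (each centered set extends to an ultrafilter in the extension, which satisfies $\mathsf{ZFC}$); the maps $\mu_n(a)=\nu\big(\Vert\check a\in\dot U_n\Vert\big)$ are finitely additive precisely because an ultrafilter contains $a\vee b$ iff it contains $a$ or $b$ and never contains two disjoint elements; and strict positivity of $\mu=\sum_n 2^{-(n+1)}\mu_n$ follows from $\bigvee_n\Vert\check a\in\dot U_n\Vert=1_{\mathbb M}$ together with countable subadditivity of $\nu$ in a measure algebra. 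For $(\Rightarrow)$: the computation $\overline{\mu}^{\,\omega}\big(\prod_n(K\setminus\widehat a)\big)=\prod_n(1-\mu(a))=0$ for each fixed $a\in\mathfrak A^+$ is exactly the right engine, and you correctly identify and resolve the only delicate point, namely that the universal quantifier in ``$\check{\mathfrak A}$ is $\sigma$-centered'' ranges over a check-set, all of whose members have names of the form $\check a$, so forcing the covering statement for each ground-model $a$ separately suffices. Two small technical remarks: since $K=\mathrm{Ult}(\mathfrak A)$ need not be metrizable, it is cleaner to form $\mathbb M$ from the product (Baire) $\sigma$-algebra on $K^\omega$ modulo $\overline{\mu}^{\,\omega}$-null sets rather than from $\mathrm{Bor}(K^\omega)$ — every set your argument uses is a countable combination of clopen cylinders $\pi_n^{-1}(\widehat a)$, so this costs nothing and avoids invoking Radon product measures; and the ``random points'' $\dot x_n$ are best formalized directly as the ultrafilters $\{a\in\check{\mathfrak A}\colon [\pi_n^{-1}(\widehat a)]\in \dot G\}$, which is evidently what you intend.
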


We will prove the existence of strictly positive measure using a more classical tool: Kelley's criterion.

\begin{definition}
	Let $\mathfrak{A}$ be a Boolean algebra and let $\mathcal{A}\subseteq \mathfrak{A}^{+}$ be nonempty. For every finite sequence $s=(A_{1},\ldots,A_{n})$ of elements of $\AAA$ let
\[\kappa(s)=\frac{max\{|I|\colon I\sub\{1,\ldots,n\}\text{ and }\bigcap_{i\in I}A_{i}\neq\mathbb{0}\}}{n}.\]

Define the \emph{intersection number} of $\AAA$ by
\[\kappa(\AAA)=\inf\{\kappa(s)\colon s\in\AAA^{<\omega}\}.\]
\end{definition}

\begin{theorem}[Kelley's criterion]
	Let $\mathfrak{A}$ be a Boolean algebra. The following conditions are equivalent:
	\begin{itemize}
		\item $\mathfrak{A}$ supports a measure,
		\item $\mathfrak{A}^+ = \bigcup_n \mathcal{C}_n$, where $\kappa(\mathcal{C}_n)>0$ for each $n$. \label{Kel}
	\end{itemize}
\end{theorem}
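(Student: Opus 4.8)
The plan is to prove the two implications separately: the forward direction by an averaging argument on the Stone space, and the backward direction by a minimax/Hahn--Banach duality combined with a weak$^*$ compactness argument. Throughout I write $K=\ult(\mathfrak{A})$ and pass freely between an element $A\in\mathfrak{A}$, its clopen image $\widehat{A}\sub K$, and the extension $\overline{\mu}$ of a measure $\mu$, as in the Preliminaries.

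First I would prove that supporting a measure gives the decomposition. Suppose $\mu$ is strictly positive and set $\CC_n=\{A\in\mathfrak{A}^+\colon \mu(A)\geq 1/n\}$; strict positivity yields $\mathfrak{A}^+=\bigcup_n\CC_n$. To bound $\kappa(\CC_n)$ from below, take a sequence $s=(A_1,\dots,A_k)$ of elements of $\CC_n$ and integrate the continuous integer-valued function $f=\sum_{i=1}^k\mathbb{1}_{\widehat{A_i}}$ against $\overline{\mu}$: since $\int f\,d\overline{\mu}=\sum_i\mu(A_i)\geq k/n$ and $\overline{\mu}$ is a probability measure, $f$ attains a value $\geq k/n$ at some $x\in K$. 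The index set $I=\{i\colon x\in\widehat{A_i}\}$ then satisfies $\bigcap_{i\in I}A_i\neq\mathbb{0}$ (witnessed by $x$) and $|I|\geq k/n$, so $\kappa(s)\geq 1/n$. Hence $\kappa(\CC_n)\geq 1/n>0$.

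The converse rests on the following lemma: if $\AAA\sub\mathfrak{A}^+$ has $\kappa(\AAA)=\rho>0$, then there is a measure $\mu$ with $\mu(A)\geq\rho$ for all $A\in\AAA$. Granting it, I would apply it to each $\CC_n$ to get $\mu_n$ with $\mu_n(A)\geq\kappa(\CC_n)>0$ on $\CC_n$, and set $\mu=\sum_n 2^{-n}\mu_n$, a probability measure; for any $A\in\mathfrak{A}^+$, choosing $n$ with $A\in\CC_n$ gives $\mu(A)\geq 2^{-n}\mu_n(A)>0$, so $\mu$ is strictly positive. To prove the lemma I would work in the weak$^*$ compact convex set $P(K)$ of regular Borel probability measures on $K$, noting $\nu\mapsto\nu(\widehat{A})$ is weak$^*$ continuous. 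For a finite subfamily $\{A_1,\dots,A_m\}\sub\AAA$ I would apply the minimax theorem to the bilinear payoff $(\nu,p)\mapsto\sum_i p_i\,\nu(\widehat{A_i})$ on $P(K)\times\Delta_m$, with $\Delta_m$ the probability simplex: its value is $\max_\nu\min_i\nu(\widehat{A_i})$ on one side and $\min_p\max_{x\in K}\sum_i p_i\mathbb{1}_{\widehat{A_i}}(x)$ on the other. The combinatorial core is that for rational $p$, realized by repeating each $A_i$ proportionally in a sequence $s$ of length $k$, one has $\max_x\sum_i p_i\mathbb{1}_{\widehat{A_i}}(x)=\kappa(s)\geq\rho$; since this quantity is a maximum of finitely many linear functions of $p$, the bound $\geq\rho$ extends to all $p$ by density, forcing $\max_\nu\min_i\nu(\widehat{A_i})\geq\rho$. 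Thus the weak$^*$ closed sets $M_{\mathcal F}=\{\nu\colon\nu(\widehat{A})\geq\rho\text{ for all }A\in\mathcal F\}$, $\mathcal F\sub\AAA$ finite, are nonempty with the finite intersection property, and compactness of $P(K)$ produces a common $\nu$; transporting its values on $\clop(K)$ back to $\mathfrak{A}$ gives the desired measure.

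The main obstacle I anticipate is the lemma, and specifically the identification of the dual quantity $\min_p\max_x\sum_i p_i\mathbb{1}_{\widehat{A_i}}(x)$ with the intersection number. One must verify carefully that ``$\bigcap_{i\in I}A_i\neq\mathbb{0}$'' corresponds exactly to ``some point of $K$ lies in every $\widehat{A_i}$, $i\in I$'', and that passing from integer-length sequences to arbitrary real weights by rational approximation preserves the bound $\rho$. By contrast, both the forward averaging argument and the final compactness assembly from finite subfamilies to all of $\AAA$ should be routine once the finite minimax step is established.
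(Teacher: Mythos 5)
Your proof is correct, but there is nothing in the paper to compare it against: the paper states Kelley's criterion without proof, using it as a classical black box (it is applied, via the notion of approximability, to the slalom algebra $\mathfrak{T}/\fin$). So your write-up supplies an argument the authors deliberately omitted. What you give is essentially the standard proof of Kelley's theorem. The forward direction is the usual averaging: from $\int \sum_{i\leq k}\mathbb{1}_{\widehat{A_i}}\,d\overline{\mu}\geq k/n$ and the fact that the sum is integer-valued you extract a point of $\ult(\mathfrak{A})$ of multiplicity $\geq k/n$, and Stone duality converts membership of that point in $\bigcap_{i\in I}\widehat{A_i}$ into $\bigwedge_{i\in I}A_i\neq\mathbb{0}$, giving $\kappa(\CC_n)\geq 1/n$. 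The converse is exactly Kelley's lemma proved by duality, and the two points you flagged as delicate are indeed the only ones requiring care, and both check out: a finite meet in $\mathfrak{A}$ is nonzero iff some ultrafilter contains every term (so $\kappa(s)=\max_{x\in K}\frac{1}{k}\sum_j\mathbb{1}_{\widehat{A_{i_j}}}(x)$ for a sequence $s$ of length $k$, repetitions allowed), and the passage from rational to arbitrary $p\in\Delta_m$ is legitimate because $p\mapsto\max_x\sum_i p_i\mathbb{1}_{\widehat{A_i}}(x)$ is a maximum of finitely many linear functions of $p$ (the vectors $(\mathbb{1}_{\widehat{A_i}}(x))_i$ range over a finite subset of $\{0,1\}^m$), hence continuous. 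The minimax step itself is valid: $P(K)$ and $\Delta_m$ are compact convex, the payoff is bilinear, and separate weak$^*$ continuity of $\nu\mapsto\nu(\widehat{A_i})$ holds precisely because $\widehat{A_i}$ is clopen, so its indicator is continuous; this also gives attainment of $\max_\nu\min_i\nu(\widehat{A_i})$ and weak$^*$ closedness of your sets $M_{\mathcal F}$, so the finite-intersection-property argument closes the proof, and $\mu=\sum_n 2^{-n}\mu_n$ is strictly positive as you say. Two minor cosmetic points: discard any empty $\CC_n$ (the intersection number is defined only for nonempty families), and note that you could work directly with the compact convex set of finitely additive probability measures on $\mathfrak{A}$ instead of passing through $\overline{\mu}$ on $K$ --- but passing through the Stone space as you do is harmless and matches the paper's Preliminaries.
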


We will consider a slightly stronger property than the above (see \cite{Dzamonja-Plebanek-08}).

\begin{definition} We call a Boolean algebra $\mathfrak{A}$ \emph{appoximable} if for every $0<\delta<1$ there is a family $(\mathcal{C}_n)_n$ such that $\mathfrak{A}^+ = \bigcup \mathcal{C}_n$ and $\kappa(\mathcal{C}_n)>\delta$.
\end{definition}

A \emph{slalom} is a set $S\subseteq \omega\times\omega$ such that $S(n)\subseteq 2^n$ and $|S(n)|< 2^n$ for every $n
\in\omega$. Denote by $\mathcal{S}$ the family of all slaloms.
For a slalom $S$ denote $S_{| n} = S\cap (n\times 2^n)$. Let \[\Omega = \{(S,n)\colon n\in\omega, \ S\in \mathcal{S}, S\subseteq (n\times 2^n)\}.\]
For each $S\subseteq \omega\times\omega$ define 
\[ T_S = \{(T,n)\in \Omega \colon S_{| n} \subseteq T\}. \]
For $(S,n)\in \Omega$ let
\[ T_{(S,n)} = \{(T,m)\in \Omega \colon m\geq n, T_{| n} = S\}. \]

It will be convenient to make the following simple observations available.

\begin{lemma}\label{ll}
	Let $A$, $B\in \mathcal{S}$. Then
\begin{enumerate}
	\item $A\subseteq B$ if and only if $T_B \subseteq T_A$,
	\item\label{two} $T_{(A\cup B)} = T_A \cap T_B$,
	\item\label{intersection} if $\mathcal{F}$ is finite and $\bigcap_{F\in\mathcal{F}} T_F$ is finite, then there is $k\in \omega$ such that $\bigcup_{F\in \mathcal{F}} F(k) = 2^k$,
	\item\label{centered} if $\mathcal{A}$ is such that $\{T_A\colon A\in \mathcal{A}\}/\fin$ is centered, then $\bigcup \mathcal{A}\in \mathcal{S}$.
\end{enumerate}
\end{lemma}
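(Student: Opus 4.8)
The plan is to handle the four items essentially independently: (1) and (2) are direct unwindings of the definitions, while (3) and (4) both reduce to a single counting fact about which finite approximations $(T,n)$ can lie inside a set of the form $T_S$. Throughout I would use that $T_S$ makes sense for an \emph{arbitrary} $S\sub\omega\times\omega$, not only for slaloms. For (1), the forward direction is immediate: $A\sub B$ gives $A_{|n}\sub B_{|n}$ for every $n$, so any $(T,n)$ with $B_{|n}\sub T$ also satisfies $A_{|n}\sub T$, whence $T_B\sub T_A$. For the converse I would argue contrapositively: if $(k,t)\in A\setminus B$, pick any $n>k$; then $(B_{|n},n)\in\Omega$ lies in $T_B$ (as $B_{|n}\sub B_{|n}$) but not in $T_A$, since $(k,t)\in A_{|n}\setminus B_{|n}$, witnessing $T_B\nsubseteq T_A$. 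For (2) I would simply note $(A\cup B)_{|n}=A_{|n}\cup B_{|n}$, so $(A\cup B)_{|n}\sub T$ iff both $A_{|n}\sub T$ and $B_{|n}\sub T$; this needs no slalom hypothesis on $A\cup B$, and by induction it yields the finite-union form $T_{\bigcup\mathcal{F}}=\bigcap_{F\in\mathcal{F}}T_F$ for finite $\mathcal{F}$, which is what (3) and (4) will use.

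For (3) I would prove the contrapositive. Writing $S=\bigcup\mathcal{F}$, the finite-union form of (2) gives $\bigcap_{F\in\mathcal{F}}T_F=T_S$. If $\bigcup_{F\in\mathcal{F}}F(k)=S(k)\neq 2^k$ for every $k$, then, since each section satisfies $S(k)\sub 2^k$, we get $|S(k)|<2^k$ for all $k$, so $S$ is a genuine slalom. Hence $(S_{|n},n)\in\Omega$ and $(S_{|n},n)\in T_S$ for every $n\in\omega$, and these are pairwise distinct (different second coordinate), so $T_S$ is infinite. This is exactly the contrapositive of the stated implication.

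The main work is (4), and the hard part will be extracting a contradiction from the failure of the slalom condition. I would argue by contraposition: suppose $\bigcup\mathcal{A}\notin\mathcal{S}$, so there is a level $k$ with $(\bigcup\mathcal{A})(k)=2^k$. Since $2^k$ is finite, some finite $\mathcal{F}\sub\mathcal{A}$ already satisfies $\bigcup_{F\in\mathcal{F}}F(k)=2^k$. I then claim $\bigcap_{F\in\mathcal{F}}T_F$ is finite, which contradicts centeredness of $\{T_A\colon A\in\mathcal{A}\}/\fin$ (recall this means every finite subfamily has infinite intersection). To see the claim, observe that if $(T,n)\in\bigcap_{F\in\mathcal{F}}T_F=T_{\bigcup\mathcal{F}}$ with $n>k$, then $(\bigcup\mathcal{F})(k)=2^k\sub T(k)$ forces $|T(k)|=2^k$, contradicting $T\in\mathcal{S}$; hence every such $(T,n)$ has $n\leq k$. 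But for each fixed $n\leq k$ there are only finitely many $T$ with $(T,n)\in\Omega$ (each such $T$ is a subset of the finite set $\bigcup_{j<n}\{j\}\times 2^j$), so the intersection is finite.

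The crux, which I would flag explicitly, is that the implication needed in (4) is precisely the \emph{converse} of the one stated in (3): (3) gives ``$\bigcap_{F}T_F$ finite $\Rightarrow$ some level is full,'' whereas (4) needs ``some level full $\Rightarrow$ $\bigcap_{F}T_F$ finite.'' This converse does not follow formally from (3) and must be established directly by the counting argument above; everything else in the lemma is a mechanical translation of the definitions.
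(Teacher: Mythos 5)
Your proof is correct in all four items. The paper itself offers no proof of this lemma---it is stated as a list of ``simple observations''---so there is no authorial argument to compare against; your verifications are exactly the intended ones, including the two points of actual substance: that $T_S$ is defined for arbitrary $S\subseteq\omega\times\omega$, so $T_{A\cup B}$ and $T_{\bigcup\mathcal{F}}$ make sense even when the union is not a slalom (which is precisely the situation in items (3) and (4)), and that item (4) requires the \emph{converse} of item (3), namely that a full level $\bigcup_{F\in\mathcal{F}}F(k)=2^k$ forces $\bigcap_{F\in\mathcal{F}}T_F$ to be finite, which does not follow formally from (3) and which you establish directly by showing every $(T,n)$ in the intersection has $n\leq k$ and counting the finitely many such pairs.
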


Let
\[ \mathcal{W} = \{S\in \mathcal{S}\colon \sum |S(n)|/2^n < \infty \}. \]

Now, we are ready to define the main object of this section. Let 
\[ \mathfrak{T} = {\rm alg}\left(\{T_A \colon A\in \mathcal{W}\} \cup \{T_{(S,n)}\colon (S,n)\in\Omega\}\right). \]
Let $K$ be the Stone space of $\mathfrak{T}/\fin$. Since $\Omega$ is countable, $\mathfrak{T}/\fin$ can be embedded in $\mathcal{P}(\omega)/\fin$ and so $K$ is a growth of $\omega$.

\begin{theorem}
The Boolean algebra $\mathfrak{T}/\fin$ is an approximable non-$\sigma$-centered Boolean algebra. Consequently, $K$ is a growth of $\omega$ supporting a measure.
\end{theorem}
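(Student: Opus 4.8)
The plan is to establish two facts about $\mathfrak{T}/\fin$ separately: that it is not $\sigma$-centered and that it is approximable. Granting both, the final sentence is immediate. Approximability with, say, $\delta=1/2$ produces a cover $(\mathfrak{T}/\fin)^{+}=\bigcup_n\mathcal{C}_n$ with $\kappa(\mathcal{C}_n)>1/2>0$, so Kelley's criterion gives a (strictly positive) measure on $\mathfrak{T}/\fin$; since $\Omega$ is countable the algebra embeds in $\PP(\omega)/\fin$, so $K=\ult(\mathfrak{T}/\fin)$ is a growth of $\omega$, and the failure of $\sigma$-centeredness makes $K$ nonseparable. The strictly positive measure on the algebra then extends to a strictly positive Borel measure on $K$, as recorded in the Preliminaries.

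For non-$\sigma$-centeredness I would argue by contradiction using only the generators $T_A$, $A\in\mathcal{W}$, and Lemma \ref{ll}(\ref{centered}). Suppose $\mathfrak{T}/\fin=\bigcup_k\mathcal{D}_k$ with each $\mathcal{D}_k$ centered, and for $A\in\mathcal{W}$ choose $k(A)$ with $T_A\in\mathcal{D}_{k(A)}$ (here $T_A$ is infinite because $A$ is a slalom). For each $k$ the subfamily $\{T_A\colon k(A)=k\}$ is centered, so by Lemma \ref{ll}(\ref{centered}) the union $U_k=\bigcup\{A\colon k(A)=k\}$ is a slalom; thus every $A\in\mathcal{W}$ is contained in one of the countably many slaloms $U_k$. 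I would then diagonalize: pick levels $n_0<n_1<\cdots$ increasing fast and, for each $k$, a string $\sigma_k\in 2^{n_k}\setminus U_k(n_k)$ (possible since $|U_k(n_k)|<2^{n_k}$), and let $A^{*}$ be the slalom with $A^{*}(n_k)=\{\sigma_k\}$ and $A^{*}(n)=\emptyset$ otherwise. Then $A^{*}\in\mathcal{W}$ (as $\sum_k 2^{-n_k}<\infty$) but $A^{*}\nsubseteq U_k$ for every $k$, contradicting $A^{*}\subseteq U_{k(A^{*})}$.

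For approximability I would build one countable family of finitely additive measures serving all $\delta$ at once. Index a measure by a finite partial specification $p$: a finite $D\subseteq\omega$ together with proper subsets $p(k)\subsetneq 2^k$ for $k\in D$. For $m>\max D$ let $\nu_{p,m}$ be the distribution on $\Omega^{(m)}=\{(T,m)\in\Omega\}$ that sets $T(k)=p(k)$ for $k\in D$ and $T(k)=2^k\setminus\{\tau_k\}$ with $\tau_k$ uniform and independent for $k\notin D$, and put $\mu_p=\lim_{m\to\mathcal{U}}\nu_{p,m}$ for a fixed nonprincipal ultrafilter $\mathcal{U}$. Each $\mu_p$ is a finitely additive probability measure on $\PP(\Omega)$ vanishing on finite sets, so it descends to $\mathfrak{T}/\fin$, and there are only countably many $p$. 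The elementary computations are $\mu_p(T_A)=\prod_{k\notin D}(1-|A(k)|2^{-k})$ whenever $A(k)\subseteq p(k)$ for all $k\in D$, and $\mu_p(T_{(S,n)})=1$ once $\{0,\dots,n-1\}\subseteq D$ and $p\uhr n=S$. The key reduction is that it suffices to make every nonzero atom $\alpha$ of a finite subalgebra satisfy $\sup_p\mu_p(\alpha)=1$: then, for $\delta<\delta'<1$, the sets $\mathcal{C}_p=\{B\colon\mu_p(B)>\delta'\}$ cover $(\mathfrak{T}/\fin)^{+}$ (any nonzero $B$ dominates a nonzero atom), while averaging $\sum_i\mathbb{1}_{\widehat{B_i}}$ against $\overline{\mu_p}$ gives $\kappa(\mathcal{C}_p)\geq\delta'>\delta$. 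Because the $T_{(S,n)}$ refine into partitions of unity, a nonzero atom has the form $\alpha=T_{(S_0,n_0)}\cap T_A\cap\bigcap_{j=1}^{b}T_{C_j}^{\mathsf{c}}$ with $A=\bigcup_i A_i\in\mathcal{W}$ and $C_1,\dots,C_b\in\mathcal{W}$ (allowing $n_0=0$). Given $\varepsilon>0$ I would choose $D\supseteq\{0,\dots,n_0-1\}$ and $p$ with $p\uhr n_0=S_0$, with $p(k)\supseteq A(k)$ for $k\in D$, with $\sum_{k\notin D}|A(k)|2^{-k}<\varepsilon$, and with, for each $j$, a level in $D$ on which $p$ excludes a point of $C_j$ while still containing $A$; then every requirement of $\alpha$ except the tail containment $T\supseteq A$ holds $\mu_p$-almost surely, so $\mu_p(\alpha)=\prod_{k\notin D}(1-|A(k)|2^{-k})>1-\varepsilon$.

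The main obstacle is exactly this atom estimate. One must check that nonemptiness of $\alpha$ always supplies, for each negated generator, a level carrying a point of $C_j$ lying outside $A$, so that a single finitely supported measure can simultaneously force containment of the positive slaloms and avoidance of the negated ones. Balancing ``contain $A$ along the tail'' (which needs $\sum_k|A(k)|2^{-k}<\infty$, i.e. $A\in\mathcal{W}$) against the slalom constraint $|T(k)|<2^k$ (which forces at least one deletion per level) is what dictates both the restriction to $\mathcal{W}$ in the generators and the finite-support design of the $\mu_p$. Once the atom estimate is secured, the reduction to atoms and the Kelley averaging are routine, and the non-$\sigma$-centeredness argument above is independent and short.
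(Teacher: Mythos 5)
Your proposal is correct, and while your non-$\sigma$-centeredness argument coincides with the paper's (the paper also covers $\mathcal{W}$ by countably many centered pieces, applies Lemma \ref{ll}(\ref{centered}) to form union slaloms $W_n$, and diagonalizes --- it uses a full branch $f\in\mathcal{X}$ with $f(n)\notin W_n(n)$, of which your sparse $A^{*}$ is an inessential variant), your approximability argument takes a genuinely different route. The paper fixes $\delta$ in advance, covers the positive elements via the combinatorial families $\mathcal{W}^{\delta}_{(S,n)}=\{W\in\mathcal{W}\colon W_{|n}=S,\ \sum_{k>n}|W(k)|/2^k<1-\delta\}$ (its Claim \ref{cl1} is the same reduction to canonical pieces $T_{(S,n)}\cap T_V\cap\dots$ that you perform), and bounds the intersection number directly: the product measure on the branch space $\mathcal{X}$ gives $\lambda(A_W)>\delta$ for the avoidance sets $A_W$, an averaging argument extracts $I$ with $|I|\geq\delta k$ and a branch avoiding $\bigcup_{I}V_i$ above $n$, and Lemma \ref{ll}(\ref{intersection}) converts that into infiniteness of $\bigcap_I T_{V_i}\cap T_{(S,n)}$ (Claim \ref{cl2}). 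You instead manufacture countably many finitely additive measures $\mu_p$ on $\Omega$ itself, as ultrafilter limits of random slaloms that are deterministic on a finite $D$ and delete one uniform point per level off $D$, prove the atom estimate $\sup_p\mu_p(\alpha)=1$, and recover the intersection-number bounds from the easy direction of Kelley. The step you flagged rather than proved does hold, by a two-line argument: if $(T,m)\in\alpha$, then for each $j$ there is $k<m$ with $C_j(k)\nsubseteq T(k)$; since $A(k)\subseteq T(k)$, and $T(k)=S_0(k)$ for $k<n_0$, either $k<n_0$ and $C_j(k)\nsubseteq S_0(k)$, so the constraint $p\uhr n_0=S_0$ already excludes $C_j$, or $k\geq n_0$ and some $x\in C_j(k)\setminus A(k)$ exists, so setting $p(k)=2^k\setminus\{x\}$ (deleting one such point for each negated generator sharing that level) excludes $C_j$ while keeping $A(k)\subseteq p(k)$ and $p(k)$ proper. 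What your route buys: a single countable family of measures serves all $\delta$ simultaneously, and any convex combination $\sum_p c_p\mu_p$ with all $c_p>0$ is itself a strictly positive measure on $\mathfrak{T}/\fin$, making the measure far more explicit than the paper's purely existential use of Kelley (the introduction even remarks that for this example ``it is not clear how this measure looks like''). What the paper's route buys: it avoids ultrafilter limits and measure-theoretic bookkeeping on $\Omega$, staying inside finite combinatorics plus one application of $\lambda$ on $\mathcal{X}$.
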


Let \[ \mathcal{X} = \{f\in\omega^\omega\colon \forall n \ f(n) < 2^n\}. \]

\begin{proposition}$\mathfrak{T}/\fin$ is not $\sigma$-centered.
\end{proposition}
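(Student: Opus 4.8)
The plan is to argue directly from the definition of $\sigma$-centeredness (equivalently, from non-separability of $K$, as recalled in Section~\ref{preliminaries}). Suppose towards a contradiction that $\mathfrak{T}/\fin$ is $\sigma$-centered and fix a decomposition $(\mathfrak{T}/\fin)^+ = \bigcup_{m} \mathcal{D}_m$ into centered families. To each $f\in\mathcal{X}$ I associate the singleton slalom $A_f$ given by $A_f(0)=\emptyset$ and $A_f(n)=\{f(n)\}$ for $n\ge 1$. Since $\sum_n |A_f(n)|/2^n=\sum_{n\ge 1}2^{-n}<\infty$ we have $A_f\in\mathcal{W}$, so $T_{A_f}\in\mathfrak{T}$; moreover $T_{A_f}$ contains $((A_f)_{|n},n)$ for every $n$, so it is infinite and hence a positive element of $\mathfrak{T}/\fin$.

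Each $T_{A_f}$ then lies in some $\mathcal{D}_{m(f)}$, and putting $\mathcal{F}_m=\{f\in\mathcal{X}\colon T_{A_f}\in\mathcal{D}_m\}$ yields $\mathcal{X}=\bigcup_m\mathcal{F}_m$. The key step is to show that each $\mathcal{F}_m$ is caught by a single slalom. Fix $m$ and a finite $F\sub\mathcal{F}_m$. By Lemma~\ref{ll}(\ref{two}) we have $\bigcap_{f\in F}T_{A_f}=T_{\bigcup_{f\in F}A_f}$, and this class is nonzero because $\mathcal{D}_m$ is centered. A full level would be fatal: if $\bigcup_{f\in F}A_f(k)=2^k$ for some $k$, then every $(T,n)\in T_{\bigcup_{f\in F}A_f}$ with $n>k$ would force the slalom $T$ to satisfy $T(k)=2^k$, which is impossible, so $T_{\bigcup_{f\in F}A_f}$ would be finite (this is the converse of Lemma~\ref{ll}(\ref{intersection})). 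Hence $\{f(k)\colon f\in F\}\subsetneq 2^k$ for every $k$ and every finite $F\sub\mathcal{F}_m$, which, $2^k$ being finite, gives $\{f(k)\colon f\in\mathcal{F}_m\}\subsetneq 2^k$ for all $k$. Thus the set $\Sigma_m$ defined by $\Sigma_m(0)=\emptyset$ and $\Sigma_m(k)=\{f(k)\colon f\in\mathcal{F}_m\}$ for $k\ge 1$ is a slalom, and by construction $f(k)\in\Sigma_m(k)$ for all $k\ge 1$ whenever $f\in\mathcal{F}_m$.

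It remains to diagonalize against the countable family $(\Sigma_m)_m$. Since each $\Sigma_m$ is a slalom we have $|\Sigma_m(k)|<2^k$, so at every level there is an unused value; choosing a strictly increasing sequence $(k_m)_m$ with $k_m\ge 1$, setting $f(k_m)\in 2^{k_m}\sm\Sigma_m(k_m)$, and filling the remaining coordinates by arbitrary values below $2^n$, I obtain $f\in\mathcal{X}$ with $f(k_m)\notin\Sigma_m(k_m)$, hence $f\notin\mathcal{F}_m$, for every $m$. This contradicts $\mathcal{X}=\bigcup_m\mathcal{F}_m$, and so $\mathfrak{T}/\fin$ is not $\sigma$-centered.

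I expect the genuine content to be concentrated in the middle paragraph: recognising, via Lemma~\ref{ll}(\ref{two}) and (\ref{intersection}), that a centered family of singleton generators $T_{A_f}$ is precisely a family of branches through one common slalom, so that $\sigma$-centeredness would express $\mathcal{X}$ as a countable union of such branch-sets. Once this is in place the final diagonalization is the standard exploitation of the defining inequality $|S(k)|<2^k$ and presents no real difficulty. The only point requiring care is the bookkeeping at level $0$, where the slalom condition forces the $0$-th section to be empty.
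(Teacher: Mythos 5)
Your proof is correct and takes essentially the same route as the paper: $\sigma$-centeredness is turned into a covering of $\mathcal{X}$ by countably many slaloms (your $\Sigma_m$ plays the role of the paper's $W_n=\bigcup\mathcal{W}_n$, obtained there by citing Lemma~\ref{ll}(\ref{centered}), whose relevant instance you reprove inline via Lemma~\ref{ll}(\ref{two}) and the converse direction of Lemma~\ref{ll}(\ref{intersection})), followed by the same diagonalization against $|S(k)|<2^k$. The only differences are cosmetic: you restrict to the singleton slaloms $A_f$ rather than decomposing all of $\mathcal{W}$, and your level-$0$ bookkeeping ($A_f(0)=\emptyset$) in fact tidies up the paper's slightly careless identification of $f\in\mathcal{X}$ with an element of $\mathcal{W}$.
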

\begin{proof}
	Suppose towards contradiction that $\mathfrak{T}/\fin$ is $\sigma$-centered. In particular $\mathcal{W} = \bigcup \mathcal{W}_n$, where $\{T_W\colon W\in \mathcal{W}_n\}/\fin$ is centered for each $n$. By Lemma \ref{ll} (\ref{centered}) $W_n = \bigcup
	\mathcal{W}_n \in \mathcal{S}$ for each $n$. Now, pick $f\in \mathcal{X}$ to be such that $f(n)\notin W_n(n)$. Clearly, $f\in \mathcal{W}$ but $f\notin \mathcal{W}_n$ for every $n$. A contradiction.
 \end{proof}

\begin{proposition}
	$\mathfrak{T}/\fin$ is approximable.
\end{proposition}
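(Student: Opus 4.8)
The plan is to fix $0<\delta<1$, choose $\eta\in(0,1-\delta)$, and exhibit a countable cover $(\mathfrak{T}/\fin)^+=\bigcup_{(S,N)\in\Omega}\mathcal{C}_{(S,N)}$ in which every $\mathcal{C}_{(S,N)}$ has $\kappa(\mathcal{C}_{(S,N)})\ge 1-\eta>\delta$; since $\Omega$ is countable this is exactly approximability. The engine will be a first-moment estimate over $\mathcal{X}$, and the bulk of the work is a normal-form reduction letting every positive element be replaced, from below, by one of a very rigid shape.

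First I would show that every positive $E\in\mathfrak{T}/\fin$ lies above an element of the form $T_A\cap T_{(S,N)}$ with $A\in\mathcal{W}$, $(S,N)\in\Omega$, $A|_N\subseteq S$, and $\sum_{k\ge N}|A(k)|/2^k<\eta$. Writing $E$ in disjunctive normal form as a finite union of intersections of generators and complements of generators, pick an infinite term
\[
E_0=\bigcap_p T_{A_p}\cap\bigcap_q T_{B_q}^{\,\mathsf c}\cap\bigcap_r T_{(S_r,n_r)}\cap\bigcap_t T_{(S'_t,n'_t)}^{\,\mathsf c}.
\]
By Lemma~\ref{ll}(\ref{two}) the positive slalom generators collapse to a single $T_A$ with $A=\bigcup_p A_p\in\mathcal{W}$, and (since $E_0\neq\emptyset$) the positive cylinders collapse to one $T_{(S_0,n_0)}$; moreover $A\in\mathcal{S}$, as $T_A\supseteq E_0$ is infinite. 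Now choose a point $(T^\ast,m^\ast)\in E_0$ whose length $m^\ast$ exceeds $n_0$ and all $n'_t$ and is large enough that $\sum_{k\ge m^\ast}|A(k)|/2^k<\eta$, and set $E'=T_A\cap T_{(T^\ast,m^\ast)}$, so that $(S,N)=(T^\ast,m^\ast)$. The inclusion $E'\subseteq T_A$ is immediate and $E'\subseteq T_{(S_0,n_0)}$ since $T^\ast|_{n_0}=S_0$; the key point is that $T^\ast$ already witnesses each complement (for each $q$ some level below $m^\ast$ has $B_q\not\subseteq T^\ast$, else $T^\ast\in T_{B_q}$, and $T^\ast|_{n'_t}\neq S'_t$), so every $(T,m)\in T_{(T^\ast,m^\ast)}$ inherits these witnesses and $E'\subseteq E_0\subseteq E$. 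Finally $E'$ is infinite because above level $m^\ast$ one may extend $T^\ast$ by arbitrarily long runs of proper sets containing $A$.

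Next, for $(S,N)\in\Omega$ let $\mathcal{D}_{(S,N)}=\{T_A\cap T_{(S,N)}:A\in\mathcal{W},\ A|_N\subseteq S,\ \sum_{k\ge N}|A(k)|/2^k<\eta\}$, so that Steps above assign to each positive $E$ a witness $c(E)\in\mathcal{D}_{(S,N)}$ with $c(E)\le E$. Given $T_{A_1}\cap T_{(S,N)},\dots,T_{A_n}\cap T_{(S,N)}$ from one such family, equip $\mathcal{X}$ with the product $\nu$ of the uniform measures on the $2^k$ and put $I(f)=\{i:f(k)\notin A_i(k)\text{ for all }k\ge N\}$. Then
\[
\int |I(f)|\,\de\nu=\sum_{i=1}^n\ \prod_{k\ge N}\Big(1-\tfrac{|A_i(k)|}{2^k}\Big)\ \ge\ \sum_{i=1}^n\Big(1-\sum_{k\ge N}\tfrac{|A_i(k)|}{2^k}\Big)\ >\ (1-\eta)n,
\]
so some $f$ gives $|I(f)|\ge(1-\eta)n$; for $I=I(f)$ the set $\bigcup_{i\in I}A_i$ is a slalom (it avoids $f(k)$ above $N$ and is $\subseteq S$ below $N$), whence $\bigcap_{i\in I}(T_{A_i}\cap T_{(S,N)})=T_{\bigcup_{i\in I}A_i}\cap T_{(S,N)}$ is infinite and $\kappa(\mathcal{D}_{(S,N)})\ge 1-\eta$. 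Setting $\mathcal{C}_{(S,N)}=\{E:c(E)\in\mathcal{D}_{(S,N)}\}$ and using that $\kappa$ does not decrease under shrinking members (if $\bigcap_{i\in I}c(E_i)\neq\mathbb{0}$ then $\bigcap_{i\in I}E_i\neq\mathbb{0}$), we get $\kappa(\mathcal{C}_{(S,N)})\ge\kappa(\mathcal{D}_{(S,N)})\ge 1-\eta>\delta$, completing the cover.

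I expect the delicate step to be the reduction: one must absorb all negated generators into a single cylinder while retaining the factor $T_A$, which — unlike the complements — cannot be swallowed by any cylinder because it constrains arbitrarily high coordinates, and one must confirm that the resulting $T_A\cap T_{(S,N)}$ is genuinely infinite rather than merely nonempty. Taking the cylinder base to be the initial segment $T^\ast$ of an actual point of $E_0$ of sufficiently large length is precisely what keeps this bookkeeping clean; the remaining first-moment computation is then routine.
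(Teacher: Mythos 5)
Your proposal is correct and takes essentially the same route as the paper's proof: you fragment $(\mathfrak{T}/\fin)^+$ over the countable index set $\Omega$ by reducing each positive element to a lower bound of the form $T_A\cap T_{(S,N)}$ with small tail sum (the paper's Claim \ref{cl1}), and then bound the intersection number via the same first-moment averaging over $\mathcal{X}$ with the uniform product measure (the paper's Claim \ref{cl2}). Your write-up is if anything slightly more careful than the paper's — you treat the cylinder generators $T_{(S,n)}$ and their complements explicitly in the normal form, verify that the lower bound $T_A\cap T_{(T^\ast,m^\ast)}$ is infinite, and the slack parameter $\eta<1-\delta$ yields the strict inequality $\kappa(\mathcal{C}_{(S,N)})\geq 1-\eta>\delta$ uniformly — but these are refinements of presentation, not a different argument.
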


\begin{proof}
	Let $0<\delta<1$. For $(S,n)\in \Omega$ define
	\[ \mathcal{W}^\delta_{(S,n)} = \{W\in \mathcal{W}\colon W_{| n}=S \mbox{ and } \sum_{k>n} |W(k)|/2^k < 1 - \delta\}. \]
It is easy to see that 
\[ \mathcal{W} = \bigcup_{(S,n)\in \Omega} \mathcal{W}^\delta_{(S,n)}. \]
We claim that even something stronger is true. 
\bigskip

\begin{claim} \label{cl1}For each infinite $A\in \mathfrak{T}$ there is $(S,n)\in \Omega$ and $V\in \mathcal{W}^\delta_{(S,n)}$ such that $T_V \cap T_{(S,n)} \subseteq A$.
\end{claim}

\begin{proof}It is sufficient to consider only elements of the form
\[ A = \big(T^c_{V_0} \cap T^c_{V_1} \cap \dots \cap T^c_{V_l}\big) \cap \big(T_{V_{l+1}}\cap \dots \cap T_{V_L}\big). \]
Also, thanks to Lemma \ref{ll}(\ref{two}) 
\[ A =  \big(T^c_{V_0} \cap T^c_{V_1} \cap \dots \cap T^c_{V_l}\big) \cap T_V, \]
where $V = V_{l+1} \cup \dots \cup V_L\in \mathcal{W}$. Now, since $A$ is infinite, $V_i\nsubseteq V$ for each $i\leq l$. Let $n$ be big enough so that $V_i \cap (n\times 2^\omega) \nsubseteq V_{|n}$. Let $S = V_{| n}$. It is plain to check that $T_V
\cap T_{(S,n)} \subseteq A$.
\end{proof}

\begin{claim} \label{cl2}$\kappa(\{T_W \cap T_{(S,n)}\colon W\in \mathcal{W}^\delta_{(S,n)}\}/\fin)>\delta$ for each $(S,n)\in \Omega$.\end{claim}

\begin{proof}
Fix $(S,n)\in \Omega$. For $W\in \mathcal{W}^\delta_{(S,n)}$ let \[ A_W = \{f\in \mathcal{X}\colon f(k)\notin W(k) \mbox{ for } k>n\}. \]
Of course $\lambda(A_{(i,j)}) = 1-1/2^i$ for $i>k$. Since \[ A_W = \bigcap_{(i,j)\in W, i>n} A_{(i,j)} \] and $\sum_{i>n} |W(i)|/2^i < 1 -\delta$ we have that \[ \lambda(A_W)>\delta. \]
Let $(V_i)_{i<k}$ be a sequence of elements of $\mathcal{W}^\delta_{(S,n)}$. There is $I\subseteq k$ such that $|I|\geq \delta \cdot k$ and there is $f\in \bigcap_I A_{V_i}$. Therefore, $\bigcup_I V_i \in \mathcal{W}$, just because $f(k)\notin
\bigcup_I V_i(k)$ for $k>n$. By Lemma \ref{ll}(\ref{intersection}) $\bigcap_I T_{V_i}$ is infinite. Moreover, $V_i \cap (n\times 2^n)=S$ for every $i\leq l$ and so  $\bigcap_I T_{V_i} \cap T_{(S,n)}$ is infinite and the claim is proved.
\end{proof}

For $n\in \omega$ let  \[ \mathcal{C}_{(S,n)} = \{A\in \mathfrak{A}\colon \exists W\in\mathcal{W}^\delta_{(S,n)} \ T_W \cap T_{(S,n)} \subseteq^* A\}/\fin. \]
Claim \ref{cl1} implies that $(\mathcal{C}_{(S,n)})_{(S,n)\in\Omega}$ is a fragmentation of $\mathfrak{A}$. By Claim \ref{cl2} $\kappa(\mathcal{C}_{(S,n)})>\delta$ for each $(S,n)\in \Omega$.
\end{proof}

\section{Bell's construction} \label{section:4}

In this section we will describe Bell's construction of a ccc non-separable growth of $\omega$ from \cite{Bell80} (mentioned in the introduction) and we will show that it supports a measure.

Let $P=\{f\in\Baire\colon  f(n)\leq n+1 \text{ for each } n\in\omega\}$ and $N=\{f\restriction n\colon  f\in P,n\in\omega\}$. 
Denote $T=\{\pi\in N^{\omega}\colon \pi(n)\in\omega^{n+1} \text{ for each } n\in\omega\}$.

 For each $s\in N$ define $C_{s}=\{t\in N\colon s\sub t\}$ and for every $\pi\in T$ let 
\[C_{\pi}=\bigcup_{n\in\omega}C_{\pi(n)}.\]

Finally, let $\algb={\rm alg}\left(\{C_{\pi}\colon \pi\in T\}\right)$. Since $N$ is a countably infinite set, $\algb/\fin$ can be embedded to $\mathcal{P}(\omega)/\fin$ and so the Stone space of $\algb/\fin$ is a growth of $\omega$. It is not difficult to see that $\algb/\fin$ is not $\sigma$-centered (see \cite{Bell80}). It is also ccc. In fact Bell proved that this space is $\sigma$-$n$-linked for each $n\in \omega$, i.e. for every
$n$ we have $\algb^+ = \bigcup_i \mathcal{C}_i$ where $\mathcal{C}_i$ is $n$-linked for every $i$ (i.e. $\bigcap \FF\ne \emptyset$ whenever $\FF
\in [\mathcal{C}_i]^n$). Plainly, $\sigma$-$n$-linked spaces are ccc. 

We will show that Bell's space supports a measure. More precisely, we will show that $\algb/\fin$ is isomorphic to a certain subalgebra of $\mathrm{Bor}([0,1])/_{\lambda=0}$. This implies $\sigma$-$n$-linkedness (see also \cite{Dow-Steprans-94}), so our
theorem generalizes Bell's result.

Endow $X=\prod_{n\in\omega}\{0,\ldots,n+1\}$ with the product topology and notice that $X$ is homeomorphic to the Cantor set. For each $s\in N$ let $[s]=\{t\in X\colon s\subseteq t\}$ (the basic open subset of $X$ corresponding to $s$).
For every $\pi\in T$ define $V_{\pi}=\bigcup_{n\in\omega}[\pi(n)]$. Finally, let 
$\algc={\rm alg}\left(\{V_{\pi}\colon \pi\in T\}\right)$.
 
\begin{proposition}
The Boolean algebra $\algb/\fin$ is isomorphic to $\algc$.
\end{proposition}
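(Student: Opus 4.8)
The plan is to build the isomorphism directly from the tautological correspondence of generators $C_\pi\leftrightarrow V_\pi$, reducing everything to a single combinatorial dictionary between the tree of nodes $N$ and its space of branches $X$. Since $\algb$ is generated by $\{C_\pi:\pi\in T\}$ and $\algc$ by $\{V_\pi:\pi\in T\}$, every element on either side is a finite union of monomials
\[ \bigcap_{i=1}^{k} C_{\pi_i}\cap\bigcap_{j=1}^{r} C_{\sigma_j}^{\mathsf{c}}\qquad\text{respectively}\qquad \bigcap_{i=1}^{k} V_{\pi_i}\cap\bigcap_{j=1}^{r} V_{\sigma_j}^{\mathsf{c}}. \]
By Sikorski's Extension Criterion (\cite[Theorem 5.5]{Handbook-Boolean}) it suffices to prove the dictionary: for all $\pi_1,\dots,\pi_k,\sigma_1,\dots,\sigma_r\in T$,
\[ \bigcap_{i} C_{\pi_i}\cap\bigcap_{j} C_{\sigma_j}^{\mathsf{c}}\ \text{is infinite in } N \iff \bigcap_{i} V_{\pi_i}\cap\bigcap_{j} V_{\sigma_j}^{\mathsf{c}}\neq\emptyset\ \text{in } X. \]
Granting this, the assignment $[C_\pi]\mapsto V_\pi$ meets the criterion (a monomial of $C$'s being finite, i.e. zero in $\algb/\fin$, forces the corresponding monomial of $V$'s to be empty), so it extends to a homomorphism $\Phi\colon\algb/\fin\to\algc$; the reverse assignment $V_\pi\mapsto[C_\pi]$ extends to $\Psi\colon\algc\to\algb/\fin$ by the other implication. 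As $\Phi$ and $\Psi$ invert each other on generators they are mutually inverse isomorphisms, and $\Phi$ is onto since the $V_\pi$ generate $\algc$.

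For the dictionary, the implication $(\Leftarrow)$ is easy. Given a branch $x$ in $\bigcap_i V_{\pi_i}\cap\bigcap_j V_{\sigma_j}^{\mathsf{c}}$, pick $n_i$ with $x\restriction(n_i+1)=\pi_i(n_i)$ and set $L=\max_i(n_i+1)$. For every $m\ge L$ the node $x\restriction m$ extends each $\pi_i(n_i)$, hence lies in every $C_{\pi_i}$; moreover $x\restriction m$ cannot extend any $\sigma_j(p)$, since that would force $x\restriction(p+1)=\sigma_j(p)$, i.e. $x\in V_{\sigma_j}$. Thus all of the infinitely many nodes $\{x\restriction m: m\ge L\}$ lie in the monomial.

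The whole difficulty sits in the forward implication, and precisely in the complemented factors $C_{\sigma_j}^{\mathsf{c}}$. Write $D=\bigcap_j C_{\sigma_j}^{\mathsf{c}}$, a downward closed subtree of $N$ (each $C_{\sigma_j}$ is upward closed), and let $M=\bigcap_i C_{\pi_i}\cap D$ be the monomial, assumed infinite. Since $N$ has only finitely many nodes of any fixed length, $M$ contains some $u$ with $|u|\ge r$; this $u$ lies in $D$ and in every $C_{\pi_i}$, so $u$ extends some $\pi_i(n_i)$ for each $i$. The key point is that at level $\ell$ the tree $N$ branches into $\ell+2$ nodes, so every $v\supseteq u$ (hence $|v|\ge r$) has more than $r=|J|$ immediate successors; of these at most $r$ can leave $D$, because a successor $v^\frown a$ (of length $|v|+1$) leaves $D$ only by \emph{equalling} one of the $r$ nodes $\sigma_1(|v|),\dots,\sigma_r(|v|)$, since $v\in D$ already forbids extending any shorter $\sigma_j(p)$. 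Hence, starting from $u$, we may keep choosing successors inside $D$ and build a branch $x\supseteq u$ with $x\in[D]$. This branch avoids every $V_{\sigma_j}$ (its initial segments stay in $D$) and, as $x\supseteq u\supseteq\pi_i(n_i)$, lies in every $V_{\pi_i}$, so the right-hand monomial is nonempty. The main obstacle is exactly this step: finitely many slaloms $\sigma_j$ can never surround a deep node of the unboundedly branching tree $N$, which is the very feature of $P=\{f:f(n)\le n+1\}$ that makes Bell's construction work.
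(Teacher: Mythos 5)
Your proof is correct and takes essentially the same route as the paper: the Sikorski reduction to the equivalence ``$\bigcap_i C_{\pi_i}\cap\bigcap_j C_{\sigma_j}^{\mathsf{c}}$ infinite $\iff$ $\bigcap_i V_{\pi_i}\cap\bigcap_j V_{\sigma_j}^{\mathsf{c}}\neq\emptyset$'', the easy direction via the tail of initial segments of a witnessing branch, and the hard direction by greedily extending a sufficiently long node of the infinite monomial through the downward-closed tree $\bigcap_j C_{\sigma_j}^{\mathsf{c}}$, using that a node of length $\ell\geq r$ has $\ell+2>r+1$ immediate successors of which at most $r$ can equal one of the nodes $\sigma_1(\ell),\dots,\sigma_r(\ell)$. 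This branch-building step is exactly the paper's Claim \ref{branch}, so nothing further is needed.
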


\begin{proof}
Define $f\colon \{C_{\pi}\colon \pi\in T\}\to\{V_{\pi}\colon \pi\in T\}$ by $f(C_{\pi})=V_{\pi}$ for $\pi\in T$. We claim that such $f$ can be extended to a function $\bar{f}\colon \algb\to\algc$ inducing a Boolean isomorphism of $\algb/\fin$ and $\algc$.

By the Sikorski's Extension Criterion we only need to prove that
\[\bigcap_{i=1}^{n}C_{\pi_{i}^{\prime}}\cap\bigcap_{j=1}^{m}C_{\pi_{j}}^\mathsf{c} \text{ is finite} \iff\bigcap_{i=1}^{n}V_{\pi_{i}^{\prime}}\cap\bigcap_{j=1}^{m}V_{\pi_{j}}^\mathsf{c}=\emptyset\]
for every $\pi_{1},\ldots,\pi_{m},\pi_{1}^\prime,\ldots,\pi_{n}^\prime\in T$.

First, assume that there exists $t\in\bigcap_{i=1}^{n}V_{\pi_{i}^{\prime}}\cap\bigcap_{j=1}^{m}V_{\pi_{j}}^\mathsf{c}$ for some $\pi_{1},\ldots,\pi_{m}$,\\ $\pi_{1}^\prime,\ldots,\pi_{n}^\prime\in T$. 
As $\bigcap_{i=1}^{n}V_{\pi_{i}^{\prime}}$ is an open subset of $X$, there exists $k\in\omega$ such that 
$[t|k]\sub\bigcap_{i=1}^{n}V_{\pi_{i}^{\prime}}$, thus $t|k\in\bigcap_{i=1}^{n}C_{\pi_{i}^{\prime}}$.

Since $t$ does not extend any of $\pi_{j}(i)$ for $j=1,\ldots,m$ and $i\in\omega$, for each $l\geq k$ we have 
\[t|l\in C_{t|k}\cap\bigcap_{j=1}^{m}C_{\pi_{j}}^\mathsf{c}\sub\bigcap_{i=1}^{n}C_{\pi_{i}^{\prime}}\cap\bigcap_{j=1}^{m}C_{\pi_{j}}^\mathsf{c},\] hence the latter set is infinite.

On the other hand, assume that $D = \bigcap_{i=1}^{n}C_{\pi_{i}^{\prime}}\cap\bigcap_{j=1}^{m}C_{\pi_{j}}^\mathsf{c}$ is infinite for some $\pi_{1},\ldots,\pi_{m}$,\\ $\pi_{1}^\prime,\ldots,\pi_{n}^\prime\in T$. 

\begin{claim}\label{branch}
There is $t\in X$ and $M\in \omega$ such that $t|l\in D$ for each $l\geq M$.
\end{claim}

\begin{proof}
	We will construct $t$ inductively by finding the sequence of its initial segments $(t_k)_{k\geq M}$. First, notice that as $D$ is infinite, it must contain arbitrarily long sequences from $N$. In particular, there is $M>m$ and $t_M \in \omega^M \cap D$.  
Now, assume that we have $t_k$ of length $k$, such that $t_M \subseteq t_k$ and $t_k \in D$. We will show that there is $p\leq k+1$ such that $t_k \text{\textasciicircum} p\in D$ and that will finish the proof. 

Clearly, $t_k \text{\textasciicircum} r \in \bigcap_{i=1}^{n}C_{\pi_{i}^{\prime}}$ for every $r\leq k+1$. So, we need only to find $p\leq k+1$ such that $t_k \text{\textasciicircum} p \in \bigcap_{j=1}^{m}C_{\pi_{j}}^\mathsf{c}$. Notice that if
$t_k \text{\textasciicircum} r$ does not extend $\pi_j(k)$ for some $j\leq m$, then $t_k \text{\textasciicircum} r \in C_{\pi_{j}}^\mathsf{c}$. Hence, as $k>m$, there is $p\leq k+1$ such that $t_k \text{\textasciicircum} p \in \bigcap_{j=1}^{m}C_{\pi_{j}}^\mathsf{c}$.

Let $t$ be the unique element of $X$ such that $t_k\subseteq t$ for each $k\geq M$.
\end{proof}

Let $t$ be as in Claim \ref{branch}. Then $t\in\bigcap_{i=1}^{n}V_{\pi_{i}^{\prime}}$, since $t$ extends $\pi_{i}^\prime(l)$ for each $i=1,\ldots,n$ and $l\in\omega$.
Also, $t\in\bigcap_{j=1}^{m}V_{\pi_{j}}^\mathsf{c}$. Otherwise there would be $j\in\{1,\ldots,m\}$ and $l\in\omega$ such that $t$ extends $\pi_{j}(l)$. But then $t|r$ would extend $\pi_{j}(l)$ for some $r>M$, a contradiction with Claim
$\ref{branch}$. So, \[\bigcap_{i=1}^{n}V_{\pi_{i}^{\prime}}\cap\bigcap_{j=1}^{m}V_{\pi_{j}}^\mathsf{c}\neq\emptyset\] and we are done.

\end{proof}

The above lemma allows us to look for a strictly positive measure on $\algc$ instead of $\algb/\fin$.
Denote by $\lambda$ the standard product measure on $X$ (defined by $\lambda([s])=\frac{1}{(n+1)!}$ for $s\in N\cap\omega^{n}$).

\begin{proposition}
The measure $\lambda$ is strictly positive on $\algc$.
\end{proposition}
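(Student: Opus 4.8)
The plan is to reduce strict positivity to a single lower estimate for the measure of a nonempty \emph{atom} of $\algc$, and then to exploit the factorial decay of the measures of the cylinders $[\pi(n)]$ that are removed when one intersects with a complement $V_{\pi}^\mathsf{c}$. First I would observe that every nonzero element of $\algc$ is a finite union of sets of the form
$A=\bigcap_{i=1}^{n}V_{\pi_{i}^{\prime}}\cap\bigcap_{j=1}^{m}V_{\pi_{j}}^\mathsf{c}$ with $\pi_{1},\ldots,\pi_{m},\pi_{1}^{\prime},\ldots,\pi_{n}^{\prime}\in T$; since $\lambda$ is finitely additive, it suffices to prove $\lambda(A)>0$ whenever such an $A$ is nonempty.

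Fix a point $t\in A$. Because $\bigcap_{i=1}^{n}V_{\pi_{i}^{\prime}}$ is open and contains $t$, there is some $k$ — which I will take as large as I please — with $[t|k]\subseteq\bigcap_{i=1}^{n}V_{\pi_{i}^{\prime}}$. Writing $s=t|k$, we then have $[s]\cap A=[s]\cap\bigcap_{j=1}^{m}V_{\pi_{j}}^\mathsf{c}$ and $[s]\cap A\subseteq A$, so it is enough to bound this last set from below. Recall that $V_{\pi_{j}}=\bigcup_{n}[\pi_{j}(n)]$ with $\pi_{j}(n)$ of length $n+1$, so that $[s]\cap V_{\pi_{j}}^\mathsf{c}=[s]\setminus\bigcup_{n}[\pi_{j}(n)]$. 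Since $t\in V_{\pi_{j}}^\mathsf{c}$, the sequence $s=t|k$ extends none of the $\pi_{j}(n)$ with $n+1\leq k$, so only the cylinders with $n\geq k$ can meet $[s]$, and each such cylinder has measure $\lambda([\pi_{j}(n)])=1/(n+2)!$. A union bound then gives
\[
\lambda\Big([s]\cap\bigcap_{j=1}^{m}V_{\pi_{j}}^\mathsf{c}\Big)\ \geq\ \lambda([s])-\sum_{j=1}^{m}\sum_{n\geq k}\lambda([\pi_{j}(n)])\ \geq\ \frac{1}{(k+1)!}-m\sum_{n\geq k}\frac{1}{(n+2)!}.
\]
Using $\sum_{n\geq k}1/(n+2)!\leq 2/(k+2)!$, the right-hand side is at least $\frac{1}{(k+1)!}\big(1-\tfrac{2m}{k+2}\big)$, which is strictly positive as soon as $k>2m-2$. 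Choosing $k$ at least this large (and no smaller than the $k$ produced in the previous step) yields $\lambda(A)\geq\lambda([s]\cap A)>0$.

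The main point — and the only place where the specific product measure enters — is that the forbidden cylinders $[\pi_{j}(n)]$ shrink factorially fast, so that for a sufficiently long $s$ their total mass inside $[s]$ is a vanishing fraction of $\lambda([s])$; once $k$ outgrows the finite number $m$ of complemented generators, positivity follows automatically. I expect no genuine obstacle beyond careful bookkeeping of the indices, namely ensuring that $s$ is long enough both to sit inside $\bigcap_{i=1}^{n}V_{\pi_{i}^{\prime}}$ and to make the tail $\sum_{n\geq k}1/(n+2)!$ negligible.
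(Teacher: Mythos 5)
Your proof is correct, and while it rests on the same quantitative fact as the paper's proof --- the factorial decay $\lambda([\pi_j(n)])=1/(n+2)!$ of the removed cylinders eventually loses to the measure of a single basic cylinder once the level is large compared to the number $m$ of complemented generators --- it is organized along a genuinely different route. The paper reduces to the $\pi$-base of nonempty sets $[s]\cap\bigcap_{j=1}^{m}V_{\pi_j}^\mathsf{c}$ with $s$ \emph{fixed}, and argues by contradiction: assuming $\lambda\big([s]\cap\bigcup_j V_{\pi_j}\big)=\lambda([s])$, it exploits nonemptiness at every level $n$ (the set $[s]\cap A_n^\mathsf{c}$, where $A_n=\bigcup_j\bigcup_{i<n}[\pi_j(i)]$, is a nonempty union of level-$n$ cylinders, hence has measure at least $1/(n+1)!$), and then beats it against the tail via a Lagrange-remainder estimate (Lemma \ref{taylor}) with threshold $n>3m$. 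You avoid the contradiction and the Taylor lemma entirely: you lengthen $s$ along a witness point $t$ of the nonempty set, so that $t\in\bigcap_j V_{\pi_j}^\mathsf{c}$ guarantees that no removed cylinder of level below $k=|s|$ meets $[s]$, and then a direct union bound with the elementary tail estimate $\sum_{n\geq k}1/(n+2)!\leq 2/(k+2)!$ gives $\lambda\big([s]\cap\bigcap_j V_{\pi_j}^\mathsf{c}\big)\geq \frac{1}{(k+1)!}\big(1-\frac{2m}{k+2}\big)>0$ once $k>2m-2$. The freedom to choose $k$ large is exactly what substitutes for the paper's level-by-level nonemptiness argument; in exchange, the paper's formulation establishes the slightly sharper-looking statement $\lambda\big([s]\cap\bigcup_j V_{\pi_j}\big)<\lambda([s])$ for every fixed nonempty $\pi$-base element, without refining $s$. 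Your bookkeeping checks out: cylinders $[\pi_j(n)]$ with $n+1\leq k$ indeed miss $[s]$ because $t$ extends none of the $\pi_j(n)$; $\lambda([s])=1/(k+1)!$ matches the paper's normalization $\lambda([s])=1/(n+1)!$ for $s$ of length $n$; and the reduction of an arbitrary nonzero element of $\algc$ to a nonempty intersection $\bigcap_i V_{\pi_i'}\cap\bigcap_j V_{\pi_j}^\mathsf{c}$ (including the degenerate cases $n=0$ or $m=0$) is the standard normal-form argument, matching the paper's $\pi$-base reduction.
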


\begin{proof}
Notice that the set of nonempty elements of 
\[ \mathcal{P} = \{[s]\cap \bigcap_{j=1}^m V^c_{\pi_j}\colon s\in N, \pi_1, \dots \pi_m \in T\} \]
forms a $\pi$-base for $\algc$. 
So it is enough to show that every nonempty element of $\mathcal{P}$ is $\lambda$-positive or, equivalently, that 
\[ \lambda\big([s]\cap \bigcup_{j=1}^m V_{\pi_j} \big) < \lambda([s]) \]
for $s\in N, \pi_1, \dots \pi_m \in T$ such that $[s]\cap \bigcap_{j=1}^m V_{\pi_j}^c \ne \emptyset$.

Suppose towards contradiction that
\begin{equation}\label{wypelnia}
\lambda\Big([s]\cap\bigcup_{j=1}^{m}V_{\pi_{j}}\Big)=\lambda([s])
\end{equation}
for some $s\in\omega^K\cap N$, where $K\in\omega$, and $\pi_1, \dots \pi_m \in T$ satisfying $[s]\cap \bigcap_{j=1}^m V_{\pi_j}^c \ne \emptyset$.

For each $n\geq 1$ denote $A_{n}=\bigcup_{j=1}^{m}\bigcup_{i=0}^{n-1}[\pi_{j}(i)]$ and notice that $A_{n}$ is a disjoint union of sets of the form $[t]$, $t\in\omega^n$. 
Therefore, for each $n>K$ the set $[s]\cap A_{n}^\mathsf{c}$ is also a disjoint union of such basic subsets. By the assumption $\emptyset\neq[s]\cap\bigcap_{j=1}^{m}V_{\pi_{j}}^\mathsf{c}$ and so the set $[s]\cap A_{n}^\mathsf{c}$ is nonempty.
Therefore, 
for every $n>K$ we have
\begin{equation}\label{duzo}
\lambda\big([s]\cap A_{n}^\mathsf{c}\big)\geq\frac{1}{(n+1)!}.
\end{equation}

For every $l\in\omega$ the set $A_{l+1}\backslash A_{l}$ is a disjoint union of at most $m$ subsets of form $[t]$, where $t\in\omega^{l+1}$, thus 
\begin{equation}\label{malo}
\lambda\big(A_{l+1}\backslash A_{l}\big)\leq m\cdot\frac{1}{(l+2)!}.
\end{equation}

\begin{lemma}\label{taylor}
For every $n>3m$ we have
\[m\cdot\sum_{l=n}^{\infty}\frac{1}{(l+1)!}<\frac{1}{n!}.\]
\end{lemma}

\begin{proof}
The series $\sum_{l=n}^{\infty}\frac{1}{(l+1)!}$ is a remainder term at $x=1$ of the $n$-th order Taylor polynomial of the function $g(x)=e^x$ at $0$. The Lagrange form of the remainder gives us
\[\sum_{l=n}^{\infty}\frac{1}{(l+1)!}=\frac{e^x}{(n+1)!}\] 
for some $x\in[0,1]$, thus
\[m\cdot\sum_{l=n}^{\infty}\frac{1}{(l+1)!}\leq\frac{e\cdot m}{n}\cdot\frac{1}{n!},\]
and we are done.
\end{proof}

Fix $n>max(K,3m)$. Using (\ref{duzo}), (\ref{malo}) and Lemma \ref{taylor} we get
\begin{multline*}\lambda\Big([s]\cap\bigcup_{j=1}^{m}V_{\pi_{j}}\cap A_{n}^\mathsf{c}\Big)=\lambda\Big([s]\cap\bigcup_{l=n}^{\infty}(A_{l+1}\backslash A_{l})\Big)\leq m\cdot\sum_{l=n}^{\infty}\frac{1}{(l+2)!}\\<\frac{1}{(n+1)!}\leq\lambda\big([s]\cap
	A_{n}^\mathsf{c}\big),
\end{multline*}

which contradicts the assumption (\ref{wypelnia}). 

\end{proof}

\section{Acknowledgements} 

We would like to thank the participants of the $\mathsf{BF}$-seminar in Wroc\l aw, Grzegorz Plebanek and Piotr Drygier, for helpful discussions concerning the subject of this paper. 

\bibliographystyle{alpha}
\bibliography{smallb}

\end{document}